\definecolor{webgreen}{rgb}{0,.5,0}
\definecolor{webbrown}{rgb}{.6,0,0}
\begin{document}

\theoremstyle{plain}
\newtheorem{theorem}{Theorem}
\newtheorem{corollary}[theorem]{Corollary}
\newtheorem{lemma}[theorem]{Lemma}
\newtheorem{proposition}[theorem]{Proposition}
\newtheorem{obs}[theorem]{Observation}

\theoremstyle{definition}
\newtheorem{definition}[theorem]{Definition}
\newtheorem{example}[theorem]{Example}
\newtheorem{conjecture}[theorem]{Conjecture}
\newtheorem{question}[theorem]{Question}

\theoremstyle{remark}
\newtheorem{remark}[theorem]{Remark}

\begin{center}
\vskip 1cm
{\LARGE\bf Multilattice graphs and perfect domination}

\vskip 1cm
\large
I. Dejter\\
University of Puerto Rico\\
Rio Piedras, PR 00936-8377\\
\href{mailto:italo.dejter@gmail.com}{\tt italo.dejter@gmail.com} \\
L. Fuentes\\
Universidad Nacional Abierta y a Distancia\\
Cartagena, Colombia\\
\href{mailto:luis.fuentes@unad.edu.co}{\tt luis.fuentes@unad.edu.co}\\
and\\
C. Araujo\\
Universidad del Atlantico\\
Barranquilla, Colombia\\
\href{mailto:carlosaraujo@mail.uniatlantico.edu.co}{\tt carlosaraujo@unad.edu.co}\\
\end{center}

\begin{abstract}\noindent
Perfect codes in the $n$-dimensio\-nal grid $\Lambda_n$ of the lattice $\mathbb{Z}^n$ ($0<n\in\mathbb{Z}$) and its quotient toroidal grids were obtained via the truncated distance in $\mathbb{Z}^n$ given between $u=(u_1,\cdots,u_n)$ and $v=(v_1, \ldots,v_n)$ as the graph distance $h(u,v)$ in $\Lambda_n$, if $|u_i-v_i|\le 1$, for all $i\in\{1, \ldots,n\}$, and as $n+1$, otherwise. 
Such codes are extended to multilattice graphs $\Gamma_n$ obtained by glueing ternary $n$-cubes along their codimension 1 ternary subcubes in such a  way that each binary $n$-subcube is contained in a unique maximal lattice of $\Gamma_n$.  The existence of an infinite number of isolated perfect truncated-metric codes of radius 2  in $\Gamma_n$ for $n=2$ is ascertained, leading to conjecture such existence for $n>2$ with radius $n$.
\end{abstract}

\section[\bf 1.]{\bf Perfect truncated-metric codes}\label{ss3}

Our motivations are algorithm-flowchart modeling that applies to alternate reality games, as well as computer architecture associated to signal transmission in a finite field (cf. \cite{E,Etzion,BH}).
Related topics of coding theory and lattice domination are based on the Lee metric arising from the Minkowsky $\ell_p$ norm \cite{Campello} with $p=1$.
These topics, tied to the still unsolved Golomb-Welch Conjecture~\cite{GW,HG,50}, were investigated via perfect Lee codes \cite{E}, diameter perfect Lee codes \cite{BH}, tilings with generalized Lee spheres \cite{BE,Etzion}, perfect dominating sets (PDS's) \cite {W}, perfect-distance dominating sets (PDDS's) \cite{ADH} and efficient dominating sets \cite{DS}.

Let $0<n\in\mathbb{Z}$. The aforementioned codes and dominating sets are realized in the tiling of the lattice $\mathbb{Z}^n$ whose tiles are translates of a common generalized Lee sphere. This tiling happens in
 the $n${\it -dimensional grid} $\Lambda_n$, namely the {\it lattice graph} whose vertex set is $\mathbb{Z}^n$ with exactly one edge between each two vertices at euclidean distance 1. A  natural follow-up here is to consider tilings of $\mathbb{Z}^n$ with two different generalized Lee spheres, which for $n>3$ is only possible by modifying the notion of Lee distance $d$ to that of a {\it truncated distance} $\rho$, defined below (not a standard distance, but see Remark~\ref{ell}), having perfect code applications (Subsection~\ref{s2}, based on the rainbow-coloring results of \cite{rainbow}).
 Furthermore, {\it multilattice graphs} $\Gamma_n$, that are compounds of ternary $n$-cubes glued along codimension 1 ternary subcubes (Remark~\ref{ter} and Section~\ref{s4}) generalizing the lattice graphs $\Lambda_n$ (even though the $\Gamma_n$ are not lattice graphs themselves), allow to extend the mentioned perfect code applications (Section~\ref{s5}). The graphs $\Gamma_n$ can be taken as `` alternate reality" graphs since they offer from each vertex of a ternary $n$-cube glued in the compound $\Gamma_n$ two ternary options as arrows indicating coordinate directions.

The {\it Hamming distance} $h(u,v)$ between vectors $u=(u_1,\cdots,u_n)$ and $v=(v_1, \ldots,v_n)$ in $\mathbb{Z}^n\subset\mathbb{R}^n$ is the number of positions $i\in[n]=\{1, \ldots,n\}$ for which $u_i\ne v_i$.
Now, let $\rho:\mathbb{Z}^n\times\mathbb{Z}^n\rightarrow\mathbb{Z}$ be given by:
\[\rho(u,v)=\begin{cases}
h(u,v),&\mbox{ if }|u_i-v_i|\le 1\mbox{ for all }i\in[n]; \\
n+1,& \mbox{otherwise.}\\
\end{cases}
\]

Given $S\subseteq\mathbb{Z}^n$, $[S]$ is the induced subgraph of $S$ in $\Lambda_n$. In the definition of $\rho$, the Hamming distance $h$ is equivalent to the graph distance in $\Lambda_n$, which is extensible to the graphs $\Gamma_n$, where
 algorithm flowcharts can be adapted following ternary directions in ternary $n$-cubes, with vertices representing decision conditional diamonds that lead to ``Yes" or ``True" and ``No" or ``False" arrows, i.e. two oriented edges representing two sides of a triangle in a ternary $(n-1)$-cube, each such arrow representing one or more process rectangles. 
An elementary example of this is given in Section~\ref{appendix}.

Assign to each component $H$ of $[S]$
an integer $t_H>0$ to be understood as the {\it radius} of a {\it truncated sphere with center} $H$. For every component $H'$ of $[S]$ in the translation class $\langle H\rangle$ of $H$ in $\mathbb{Z}^n$, we assume $t_{H'}=t_H$. This yields a correspondence $\kappa$ from the set  of translation classes $\langle H\rangle$ of components $H$ of $[S]$ into $\mathbb{Z}$ such that $\kappa(\langle H\rangle)=t_H$, for every $H\in\langle H\rangle$.

Let $\rho(u,S)=\min\{\rho(u,s)|s\in S\}$. Let $(H)^{\kappa(\langle H\rangle)}=\{u\in\mathbb{Z}^n|\rho(u,H)\le\kappa(\langle H\rangle)\}$.
A $\kappa$-{\it perfect truncated-metric code}, or {\it $\kappa$-PTMC}, is a set $S\subseteq\mathbb{Z}^n$ such that for every $u\in\mathbb{Z}^n$ there exists a unique $s\in S$ with $\rho(u,s)=\rho(u,S)$ and such that
the collection $$\{(H)^{\kappa(\langle H\rangle)}|H\mbox{ is a connected component of }S\}$$ forms a partition of $\mathbb{Z}^n$.
For each component $H$ of $S$, $(H)^{\kappa(\langle H\rangle)}$ is the {\it truncated sphere centered at} $H$  (or {\it around} $H$) {\it with radius} $\kappa(\langle H\rangle)$.
Here, $H$ is the {\it truncated center of} $(H)^{\kappa(\langle H\rangle)}$ and its vertices are the {\it central vertices of} $H$.

Given $0<t\in\mathbb{Z}$, a $\kappa$-PTMC is a {\it $t$-PTMC} if all $\kappa(\langle H\rangle)$'s are equal to $t$.
If $|H|=1$, then $|(H)^t|=\sum_{i=0}^t2^i{n\choose i}$, for $0\le t\le n$, (which is less than the cardinal of the corresponding $\ell_1$ sphere of radius $t$, for $t>1$ \cite{Campello}). We note that 1-PTMC's coincide with PDS's \cite{W}.

\begin{remark}\label{ter} A {\it binary} (resp. {\it ternary}) $n$-{\it cube graph} $Q_n^i$, succinctly called $n$-{\it cube} (resp. $n$-{\it tercube}), is a cartesian product $K_i\square\cdots\square K_i$ of $n$ complete graphs $K_i$ with $V(K_i)=\{0, \ldots,i-1\}=F_i$, the field of order $i=2$ (resp. $3$). In Sections~\ref{s4}-~\ref{s5}, we extend the study of PTMC's from the graphs $\Lambda_n$ to the graphs $\Gamma_n$; 
these cannot have {\it non-isolated PDS}'s, that is {\it non-isolated 1-PTMC}'s; (however, the ternary perfect single-error-correcting codes of length $\frac{3^t-1}{2}$ \cite{xxxx,BE} are {\it isolated PDS}'s, that is {\it efficient dominating sets} \cite{DS} in the ternary $\frac{3^t-1}{2}$-cubes; these also are edge-disjoint unions of triangles, for $t>0$). For $G=\Gamma_n$ we will say in Sections~\ref{s4}-\ref{s5} that: an edge-disjoint union $G$ of triangles has a {\it non-isolated PDS} $S$ if every vertex of $G$ not in $S$ is adjacent to just either one vertex of $G$ (as in {\it PDS}'s) or the two end-vertices of an edge of $S$. 
As a consequence, a non-isolated PDS is not necessarily a PDS. Section~\ref{s5} shows that $\Gamma_2$ has an infinite number of isolated 2-PTMC and we conjecture that $\Gamma_n$ has isolated $n$-PTMC's, $\forall n>2$.
\end{remark}

\begin{figure}[htp]
\includegraphics[scale=0.73]{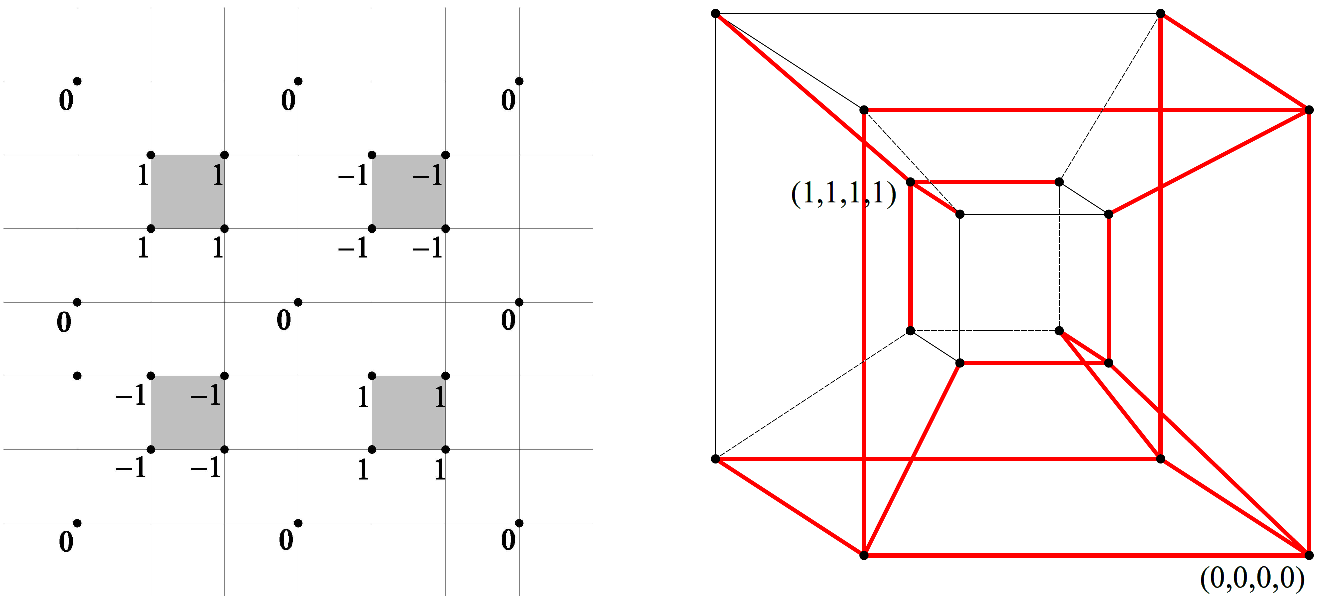}
\caption{Example accompanying Remark~\ref{apedido}}
\end{figure}

\begin{remark}\label{apedido}
 Some graph-coloring results of \cite{rainbow} are restated below in Theorem~\ref{t2}-\ref{t4} in terms of PTMC's asserting the existence of PTMC's in $\Lambda_n$ that are {\it lattice}  \cite{BH}, that is {\it lattice-like} \cite{ADH}, having their induced components $H$ with vertex sets {\bf(a)} whose convex hulls are $n$-parallelotopes (resp. both either 0-cubes and $(n-1)$-cubes); {\bf(b)} contained in, and dominating, truncated spheres centered at those $H$ with radii $n$ (resp. radius $n-2$ and 1) that are the tiles of an associated lattice tiling of $\mathbb{Z}^n$; for $n=3$, 
 this is schematized on the left of Figure 1 that represents the assignment of the last coordinate value mod 3 on the projection of the PTMC onto $\mathbb{Z}^{n-1}$; the case $n=4$ 
 (the first PTMC which is neither PDS nor PDDS) can be visualized in the same fashion, where the dominating edges, red colored in the copy of $Q^2_4\subset\mathbb{Z}^4$ on the right of Figure 1 indicate truncated spheres of radius 2 and 1 centered respectively at $(0,0,0,0)$ and $(1,1,1,1)$.
\end{remark}

\begin{remark}\label{re} We observe that: 
{\bf(I)}
 a $t$-PTMC (resp. a truncated $t$-sphere) of $\Lambda_n$ is defined like a PDDS \cite{ADH} (resp. generalized Lee sphere \cite{Etzion}) with $\rho$ instead of the Lee distance $d$; {\bf(II)} 1-PTMC's of $\Lambda_n$ (resp. truncated 1-spheres) coincide with PDS's \cite{W} (resp. generalized Lee 1-spheres).\
 {\bf(III)} an example \cite{ADH} cited in Remark~\ref{ref1}, below, justifies that our definition of PTMC above employs translation classes instead of isomorphism classes, for in such example the truncated spheres have common radius (namely 1) for both translation classes, but that does not exclude the eventuality of an example with differing radii.\end{remark}

\begin{remark}\label{motiv}
Related to Theorems~\ref{t3}-\ref{t4} on building ``lattice'' $\kappa$-PTMC's whose induced components are $r$-cubes of different dimensions $r$ ($0\le r\le n$), we have:
{\bf(A)} the perfect covering codes with spheres of two different radii in Chapter 19 \cite{Cohenetal} and
{\bf(B)}
a negative answer \cite{OW} to a conjecture \cite{W} claiming that the induced components of every 1-PTMC $S$ in an $n$-cube $Q^2_n$ are all $r$-cubes $Q^2_r$ (not necessarily in the same translation class \cite{DP}), with $r$ fixed.
In fact, it was found in \cite{OW} that a 1-PTMC in $Q^2_{13}$ whose induced components are $r$-cubes $Q^2_r$ of two different values $r=r_1$ and $r=r_2$ exists, specifically for $r_1=4$ and $r_2=0$. This seems to be the only known counterexample to the cited conjecture \cite{W}.
\end{remark}

\subsection[\bf 1.1.]{\bf Notation, $\ell_p$ metrics and periodicity}\label{pre}

If no confusion arises, every $(a_1, \ldots,a_n)\in\mathbb{Z}^n$ is expressed as $a_1\cdots a_n$. Let $O=00\cdots 0$, $e_1=10\cdots 0$, $e_2=010\cdots 0$, $\ldots$, $e_{n-1}=0\cdots 010$ and $e_n=00\cdots 01$.

Let $z\in\mathbb{Z}^n$, let $H=(V,E)$ be a subgraph of $\Lambda_n$ and let $H+z$ be the subgraph $H'=(V',E')$ whose vertex set is
$V'=V+z=\{w\in\mathbb{Z}^n;\exists\; v\in V$ such that  $w=v+z\}$ so that $uv\in E$ $\Leftrightarrow$ $(u+z)(v+z)\in E'$. For example, let $H$ be induced in $\Lambda_n$ by the vertices with entries in $\{0,1\}$. Then, every translation $H+z$ of $H$ in $\Lambda_n$ (in particular $H$ itself) is isomorphic to $Q^2_n$.

Let $i\in[n]$.
Each edge (resp. segment) $uv$ of $\Lambda_n$ (resp. $\mathbb{R}^n\supset\mathbb{Z}^n$) with $u\ne v$ is {\it parallel} to $Oe_i\in E(\Lambda_n)
\Leftrightarrow u-v\in\{\pm e_i\}$ (resp. $\Leftrightarrow\exists a\in\mathbb{R}\setminus\{0\}$ with $u-v=ae_i$). A {\it parallelotope} $\mathcal P$ in $\mathbb{R}^n$ is a cartesian product of segments parallel to some of the $Oe_i$'s ($i\in[n]$). We restrict to $\mathcal P$'s having their vertices in $\mathbb{Z}^n$. An {\it edge} of $\mathcal P$ is a segment of unit length parallel to some $Oe_i$ ($i\in[n]$) separating a pair of vertices of $\Lambda_n$ in $\mathcal P$. Recall that a {\it trivial} subgraph of $\Lambda_n$ is composed by just one vertex. The proof of the following is similar to that of Theorem 1 \cite{ADH}.

\begin{theorem}\label{rain} Let $0<t\in\mathbb{Z}$.
If $S$ is a $t${\rm -PTMC} in $\Lambda_n$, then the convex hull in $\mathbb{R}^n$ of the vertex set $V(H)$ of each nontrivial component $H$ of $[S]$ is a parallelotope in $\mathbb{R}^n$ whose edges are parallel to some or all of the segments $Oe_i$ in $E(\Lambda_n)$, where $i\in[n]$.
\end{theorem}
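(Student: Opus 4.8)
The plan is to fix a nontrivial component $H$ of $[S]$ and show that $V(H)$ is exactly the set of lattice points of its convex hull $\mathcal{C}=\mathrm{conv}(V(H))$, and that $\mathcal{C}$ is a parallelotope with edges parallel to the $Oe_i$'s. First I would observe that, since $S$ is a $t$-PTMC with $t>0$, every vertex $u$ adjacent in $\Lambda_n$ to some vertex of $H$ satisfies $\rho(u,H)=1\le t$, hence lies in the truncated sphere $(H)^t$; because the spheres of distinct components partition $\mathbb{Z}^n$ and centers of distinct components are at $\rho$-distance more than $t$ apart (otherwise uniqueness of the nearest codeword fails), no vertex of $H$ can be adjacent to a vertex of another component. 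Thus $H$ is a union of connected components of $[S]$ in the literal sense, and more importantly $H$ together with its closed neighborhood is ``insulated'': the combinatorial structure of $H$ is forced locally. The key structural input is that $\rho$ collapses to Hamming/grid distance only within an $\ell_\infty$-ball of radius $1$; I would use this to argue that if $v,w\in V(H)$ with $|v_i-w_i|\ge 2$ for some $i$, then $\rho(v,w)=n+1>t$ would already be problematic once $t<n+1$ — but since $t$ may equal or exceed $n$ I instead argue along a path in $[H]$ and track how coordinates can change.

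The heart of the argument is the following local claim: for each coordinate direction $i$, the projection $\pi_i(V(H))$ onto the $i$-th axis is an interval of integers, and $V(H)$ is ``coordinate-convex'', i.e. if $v\in V(H)$ and $v'$ differs from $v$ only in coordinate $i$ with $v'_i$ strictly between $v_i$ and some $w_i$ for $w\in V(H)$, then $v'\in V(H)$. I would prove this by a minimal-counterexample / swapping argument: take $v,w\in V(H)$ realizing a violation with $\sum_i|v_i-w_i|$ minimal, connect them by a shortest path $P$ in $[H]$, and examine the vertex $v'$ obtained by a single unit step from $v$ toward $w$ in a coordinate where they differ. If $v'\notin S$, then $v'$ is a non-code vertex adjacent to $v\in H$; I would then locate the unique codeword nearest $v'$ (it must be in $H$) and derive, using the $\ell_\infty$-radius-$1$ truncation and the partition property, that some other vertex forced to lie in $(H)^t$ actually has a strictly closer codeword, contradicting uniqueness — this is essentially the mechanism in the proof of Theorem 1 of \cite{ADH}, which the paper tells us to mimic. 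Once coordinate-convexity in every direction is established, a standard argument shows $V(H)$ is the full set of integer points of the box $\prod_i \pi_i(V(H))$, hence $\mathcal{C}$ is the parallelotope $\prod_i[\min\pi_i(V(H)),\max\pi_i(V(H))]$, whose edges are parallel to the $Oe_i$'s; directions $i$ with $\pi_i(V(H))$ a single point simply do not contribute an edge, which matches the phrase ``some or all''.

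The main obstacle I anticipate is controlling the interaction between the truncation of $\rho$ and the radius $t$ when $t\ge n$: for small $t$ the bound $\rho(v,w)=n+1>t$ instantly forbids coordinate spreads $\ge 2$, but for $t\ge n$ a component could a priori be large, and one must genuinely use the \emph{partition}/\emph{tiling} hypothesis rather than a crude distance bound. Concretely, the delicate point is showing that a ``dent'' in $V(H)$ — a missing lattice point $v'$ inside the bounding box, adjacent to two points of $H$ — cannot be absorbed consistently: $v'$ would need a unique nearest codeword, and whichever component claims it, the partition forces an incompatibility with the $\ell_\infty$-ball structure of $\rho$ around $v'$. I would handle this by a careful case analysis on $\rho(v',\cdot)$ restricted to the $\ell_\infty$-ball of radius $1$ about $v'$, exactly as in \cite{ADH}, and I expect this case analysis to be the only genuinely technical part; everything else (interval projections, reconstruction of the box from coordinate-convexity, identification of the convex hull) is routine once the dent-exclusion lemma is in hand.
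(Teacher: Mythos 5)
The paper gives no written proof of Theorem~\ref{rain}; it only remarks that the argument is ``similar to that of Theorem 1 of \cite{ADH}'', and your overall plan --- show each nontrivial component is a full integer box by local completion arguments forced by uniqueness of the nearest codeword --- is indeed the intended route. However, as written your reduction has a genuine gap. The implication ``coordinate-convexity in every direction, plus interval projections, implies $V(H)$ is the full set of integer points of $\prod_i\pi_i(V(H))$'' is false for the notion of coordinate-convexity you state: your condition only constrains values \emph{strictly between} two attained coordinates, and is therefore vacuous precisely in the critical case where coordinates differ by $1$. The set $\{00,01,10\}\subset\mathbb{Z}^2$ is connected, has interval projections, and is vacuously coordinate-convex in your sense, yet is not the box $\{0,1\}^2$; excluding exactly such components is the content of the theorem in its smallest instance, and your stated lemma does not touch it.

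What actually does the work is a one-line consequence of the uniqueness clause in the definition of a $\kappa$-PTMC that you never isolate: a vertex $u\notin S$ cannot be adjacent in $\Lambda_n$ to two distinct vertices $s,s'\in S$, since then $\rho(u,s)=\rho(u,s')=1=\rho(u,S)$ and the nearest codeword is not unique. This immediately yields unit-square completion (if three vertices of a unit square lie in $S$, the fourth is adjacent to two of them, hence lies in $S$ and in the same component), and connectivity together with square completion is what forces $V(H)$ to be a full box, whose convex hull is then the asserted axis-parallel parallelotope; this is the mechanism of Theorem 1 of \cite{ADH}. Note that this uses only the uniqueness clause and is independent of $t$ and of the partition property, so your extended worries about $t\ge n$ versus $t<n$, and about which sphere ``claims'' a dent, are red herrings. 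Two smaller points: the fact that vertices of distinct components are at truncated distance greater than $t$ follows from the partition of the spheres $(H)^{\kappa(\langle H\rangle)}$, not from uniqueness; and the claim that no vertex of $H$ is adjacent to another component is automatic from the definition of a connected component of $[S]$. Your third paragraph's ``dent-exclusion'' gestures at the right fact, but frames as a delicate case analysis what is the immediate observation above, and places it after, rather than before, the step where it is needed.
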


Let $H$ be a nontrivial induced subgraph of $\Lambda_n$ with the convex hull of $V(H)$ in $\mathbb{R}^n$ as a parallelotope whose edges are parallel to some or all of the segments $Oe_i$ in $E(\Lambda_n)$, ($i\in[n]$).
If exactly $r$ elements of $[n]$ are such values of $i$, then we say that $H$ is an $r$-{\it box}. Note that $H$ is a cartesian product $\Pi_{i=1}^n P^i$, where $P^i$ is a finite path, $\forall i\in[n]$, with exactly $r$ paths $P^i$ having positive length. 

Let $W_{n,H,t}$ be the truncated sphere of radius $t$ around an $H$ as above, where $0\le t\in\mathbb{Z}$. Then, $H$ is the truncated center of $W_{n,H,t}$.
A $t$-PTMC $S$ of $\Lambda_n$ determines a partition of $\mathbb{Z}^n$ into spheres $W_{n,H,t}$ with $H$ running over the components of $[S]$.
Such an $S$ with the components of $[S]$ obtained by translations from a fixed finite graph $H$ is said to be a {\it $t$-PTMC$[H]$}. (This imitates the definition of a $t$-PDDS$[H]$ in \cite{ADH}, mentioned in Remark~\ref{re}(I) above).
Let $S$ be a $t$-PTMC$[H]$ and let $H'$ be a component of $[S]$ obtained from $H$ by means of a translation. Then $S$ is said to be a {\it lattice $t$-PTMC$[H]$} if and only if there is a lattice $L\subseteq\mathbb{Z}^n$ such that: $H''$ is a component of $[S]$ $\Leftrightarrow$
$H''=H'+z$, for some $z\in L$.

\begin{remark}\label{ell} It is relevant to note the connections of $\rho$ and the $\ell_p$ metrics \cite{Campello}: A truncated sphere of truncated radius 1 centered at some vertex $v$ is a Lee  ($p=1$) sphere of radius 1, whereas a truncated sphere of truncated radius $n$ centered at $v$ is a sphere of radius 1 in the maximum ($p=\infty$) distance, namely $W_{n,\{v\},1}$
(which has an $n$-dimensional cube for convex hull). Thus, all truncated spheres in this work are spheres in some $\ell_p$ metric, and the truncated metrics are a convenient way to consider different $\ell_p$ metrics for the graph components.
This observation allows connections with other previous works such as (for just one $t$) perfect codes in the maximum metric and in the $\ell_p$ metrics
\cite{Campello}.
\end{remark}

\begin{remark}\label{remark} We start to rephrase the graph-coloring facts of \cite{rainbow}, to be completed in Subsection~\ref{s2}.
Announced as Theorem~\ref{t2} below, there is
a construction of lattice $n$-PTMC's $S$ whose induced subgraphs $[S]$ in $\Lambda_n$ have their components $H$ with:

\begin{enumerate}\item[ \bf(i)] vertex sets $V(H)$ whose convex hulls are $n$-boxes in $\mathbb{R}^n$ with
distance 3 to $[S]\setminus H$, and representatives (one per component $H$) forming a lattice with generators along the coordinate directions;
\item[\bf(ii)] each $V(H)$ contained in a truncated sphere $(H)^{\kappa(\langle H\rangle)}$ whose radius is $n$.
\end{enumerate}
(We do not know whether similar lattice $t$-PTMC's $S$ exist with $[S]$ having $r$-pa\-ral\-le\-lo\-to\-pes as their components, for $r$ fixed such that $0<r<n$.)

Extending the meaning of the adjective {\it lattice} in Subsection~\ref{FR} below, as in \cite{rainbow} we have in Theorems~\ref{t3}-\ref{t4} that a lattice $\kappa$-PTMC $S$ exists in $\Lambda_n$ whose induced subgraph $[S]$ has its components $H$ with
vertex sets $V(H)$:
\begin{enumerate}\item[\bf(i$'$)] having both $(n-1)$-cubes and $0$-cubes as their convex hulls in $\mathbb{R}^n$;
\item[\bf(ii$'$)] contained in truncated spheres $(H)^{\kappa(\langle H\rangle)}$ whose radii, namely 1 and $(n-2)$, correspond respectively to the $(n-1)$-cubes and 0-cubes
in item (i$'$).
\end{enumerate}
A set $S\subset \mathbb{Z}^n=E(\Lambda_n)$ is {\it periodic} if and only if there exist $p_1, \ldots,p_n$ in $\mathbb{Z}$ such that $v\in S$
implies $v\pm p_ie_i\in S, \forall i\in[n]$. Since each lattice $t$-PTMC$[H]$ $S$ is periodic \cite{ADH},
then every canonical projection from $\Lambda_n$ onto  a toroidal grid $\mathcal T$, i.e. a cartesian product ${\mathcal T}=C_{k_1p_1}\square C_{k_2p_2}\square\cdots\square C_{k_np_n}$ of $n$ cycles $C_{k_ip_i}$ ($0<k_i\in\mathbb{Z}, \forall i\in[n]$), takes  $S$ onto a $t$-PTMC$[H]$ in $\mathcal T$. This observation adapts to respective situations that complement the statements of Theorems~\ref{t2}-\ref{t4} via canonical projections from $\mathbb{Z}^n$ onto adequate toroidal grids $\mathcal T$.
\end{remark}

\begin{question} Do lattice $t$-PTMC's $S$ exist with $[S]$ having $r$-pa\-ral\-le\-lo\-to\-pes as their components, for $r$ fixed such that $0<r<n$?\end{question}

\subsection[\bf 1.2.]{\bf Further specifications}\label{FR}

Given a lattice $L$ in $\mathbb{Z}^n$, a subset $T\subseteq\mathbb{Z}^n$ that contains exactly one vertex in each class mod $L$ (so that $T$ is a complete system of coset representatives of $L$ in $\Lambda_n$) is said to be an
{\it FR} (acronym suggesting ``fundamental region'') of $L$. A partition of $\mathbb{Z}^n$ into FR's of $L$ is said to be a  {\it tiling} of $\mathbb{Z}^n$.
Those FR's are said to be its {\it tiles}.

We extend the notion of lattice $\kappa$-PTMC so that the associated {\it fundamental region} (see \cite{Calder}, pg 26), or FR (see above) of every new lattice contains a finite number of (in our applications, just two) members $H$ of each $\langle H\rangle$.

In the literature, existing constructions of lattice $t$-PTMC's in $\Lambda_n$ ($t<n$) concern just $t=1$ (see \cite{ADH,BE,Etzion,BH}) but there are not many known lattice $1$-PTMC's.
For example, \cite{Lucho} shows that there is only one lattice $1$-PTMC$[Q^2_2]$ and no non-lattice $1$-PTMC$[Q^2_2]$. In addition, there is a lattice 2-PDDS$[Q^2_1]$ in  $\Lambda_3$ arising from a tiling of Minkowsky cited in \cite{ADH}.

\begin{conjecture} There is no $t${\rm-PTMC}  lattice in $\Lambda_n$, for $1<t<n$.\end{conjecture}

This conjecture has an analogous form for perfect codes in the $\ell_p$ metrics in \cite{Campello}, and together with the conjecture mentioned in Remark~\ref{motiv}(B), produces a contrast with the constructions in Theorems~\ref{t3}-\ref{t4}, below.

If $S$ is a periodic non-lattice $t$-PTMC$[H]$ in $\Lambda_n$, then there exists $0<m\in\mathbb{Z}$ and a tiling of $\Lambda_n$ with tiles that are disjoint copies of the vertex set $V(H^*)$ of a connected subgraph $H^*$ induced in $\Lambda_n$ by the union of:
\begin{enumerate}\item[{\bf(a)}] the vertex sets of $m$ disjoint copies $H^1, \ldots,H^m$ of $H$ (components of $[S]$); \item[{\bf(b)}] the sets
$(H^j)^{\kappa(\langle H\rangle)}$ of vertices $v\in\mathbb{Z}^n$ with $\rho(v,H^j)\le t$, for $j\in[m]$, where

$(H^1)^{\kappa(\langle H\rangle)}, \ldots,(H^m)^{\kappa(\langle H\rangle)}$ are pairwise disjoint copies of $(H)^{\kappa(\langle H\rangle)}$ in $\mathbb{Z}^n$.\end{enumerate} By taking such an $m$ as small as possible, we say that $S$ is a {\it $t$-PTMC$[H;m]$}.

\begin{remark}\label{ref1} In Section 5 \cite{ADH}, a non-lattice $1$-PTMC$[Q^2_1;4]$ $S$ is shown to exist with a lattice $L_S$ based on it, each of the FR's of $L_S$ containing two copies of $Q^2_2$ parallel to $Oe_1$ and two more copies of $Q^2_2$ parallel to $Oe_2$, these four copies being components of $[S]$, namely generated say by the vertices $(1,0)$, $(2,0)$, $(1,3)$, $(2,3)$, $(0,1)$, $(0,2)$, $(3,1)$ and $(3,2)$. 
\end{remark}

Thus, even for a non-lattice $t$-PTMC $S$ in a $\Lambda_n$, a lattice can be recovered and formed by selected vertices $v_T$ in the corresponding tiles $T$ associated with $S$. We say that such set $S$ is a {\it lattice $t$-PTMC$[H;m]$}, indicating the number $m$ of isomorphic components of $[S]$ in a typical tile $T$ in which to fix a distinguished vertex $v_T$.

We further specify the developments above as follows. A $t$-PTMC $S$ in $\Lambda_n$ with the components of $[S]$ obtained by translations from two non-parallel                              subgraphs $H_0,H_1$ of $\Lambda_n$ is said to be a {\it $t$-PTMC$[H_0,H_1]$}.
The case of this in Remark~\ref{ref1} is a $\kappa$-PTMC,  with two translation classes of components of $[S]$ formed by the copies of $Q^2_2$ parallel to each of the directions coordinates. Here, $\kappa$ sends those copies onto $t=1$. Each tile of this $\kappa$-PTMC contains two copies of $Q^2_2$ parallel to $Oe_1$ and two copies of $Q^2_2$ parallel to $Oe_2$, accounting for $m=4$.

More generally,
let $t_i\in[n]$, for $i=0,1$. We say that a set $S\subset V$ is a {\it $(t_0,t_1)$-PTMC$[H_0,H_1]$} in $\Lambda_n$ if for each $v\in V$ there is:
\begin{enumerate}\item[{\bf(i$''$)}] a unique index $i\in\{0,1\}$ and a unique component $H_v^i$ of $[S]$ obtained by means of a translation from $H_i$ such that the truncated distance $\rho(v,H_v^i)$ from $v$ to $H_v^i$ satisfies $\rho(v,H_v^i)\leq t_i$ and \item[{\bf(ii$''$)}] a unique vertex $w$ in $H_v^i$ such that $\rho(v,w)=\rho(v,H_v^i)$.\end{enumerate}
Even though such a set $S$ is not lattice as in \cite{BH} (or lattice-like \cite{ADH}), it may happen that there exists a lattice $L_S\subset\mathbb{Z}^n$ such that for $0<m_0,m_1\in\mathbb{Z}$ there exists an FR of $L_S$ in $\Lambda_n$ given by the union of two disjoint subgraphs $H^*_0,H^*_1$, where $H^*_i$ ($i=0,1$) is induced in $\Lambda_n$ by the disjoint union of:
\begin{enumerate}\item[{\bf(a$'$)}] the vertex sets of $m_i$ disjoint copies $H_i^1, \ldots,H_i^{m_i}$ of $H_i$, (components of $[S]$);
 \item[{\bf(b$'$)}] the sets
$(H_i^j)^{\kappa(\langle H\rangle)}$ of vertices $v\in\mathbb{Z}^n$ for which $0<\rho(v,H_i^j)\le t_i$\,, for $j\in[m_i]$, where $(H_i^1)^{\kappa(\langle H\rangle)}, \ldots,(H_i^{m_i})^{\kappa(\langle H\rangle)}$ are pairwise disjoint copies of $H^\kappa(\langle H\rangle))$ in $\mathbb{Z}^n$.\end{enumerate}

\subsection[\bf 2.]{\bf Restatement of graph-coloring results}\label{s2}

A particular case of lattice $t$-PTMC$[H]$ is that in which $H$ is an $n$-box of $\Lambda_n$. For each such $H$, Theorem~\ref{t2} below says that there is a  lattice $n$-PTMC$[H]$ in $\Lambda_n$. (In \cite{BE}, $n$-boxes of unit volume in $\Lambda_n$ are shown to determine $1$-PDDS$[H]$'s if and only if either $n=2^r-1$ or $n=3^r-1$).

From Subsection~\ref{pre} we know that $S$ determines a partition of $\mathbb{Z}^n$ into spheres $W_{n,H',n}$, where $H'$ runs over the components of $[S[$. 
In each $W_{n,H',n}$, let $b_1b_2\cdots b_n$ be the vertex $a_1a_2\cdots a_n$
for which $a_1+a_2+\cdots+a_n$ is minimal. We say that this $b_1b_2\cdots b_n$ is the {\it anchor} of $W_{n,H',n}$. The anchors of the spheres $W_{n,H',n}$ form the lattice $L=L_S$. Without loss of generality we can assume that $O$ is the anchor of a $W_{n,H_0,n}$ whose truncated center $H_0$ is a component of $[S]$. Let $c_1c_2\cdots c_n$ be the vertex $a_1a_2\cdots a_n$ in $W_{n,H_0,n}$ for which $a_1+a_2+\cdots +a_n$ is maximal. Then $L_S$ has generating set $\{(1+c_1)e_1,(1+c_2)e_2,\ldots,(1+c_n)e_n\}$ and is formed by all linear combinations of those $(1+c_i)e_i$, ($i\in[n]$). 

\begin{theorem}\label{t2} \cite{rainbow}
For each $i\in[n]$, let $P_i$ be a path of length $c_i-2$, parallel to $Oe_i$, and let $H=\Pi_{i=1}^nP_i$ be an $n$-box in $\Lambda_n$. Then,
 there is a lattice $n${\rm-PTMC}$[H]$ $S$ of $\Lambda_n$ with minimum $\ell_1$-distance 3 between the components of $[S]$.
\end{theorem}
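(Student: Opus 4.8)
The plan is to exhibit the set $S$ explicitly and verify that the collection of truncated $n$-spheres centered at the translates of $H$ tiles $\mathbb{Z}^n$. First I would fix notation following the paragraph preceding the statement: given the path lengths $c_i-2$, set $L=L_S$ to be the lattice generated by $\{(1+c_i)e_i : i\in[n]\}$, place one copy $H_0$ of $H$ with anchor at $O$, and let $S=\{H_0+z : z\in L\}$ as a vertex set (so $S=\bigcup_{z\in L}(V(H_0)+z)$). The key structural observation is that, by definition of $\rho$, the truncated $n$-sphere $W_{n,H_0,n}=(H_0)^n$ around an $n$-box $H_0$ consists of all $v\in\mathbb{Z}^n$ such that $|v_i-x_i|\le 1$ for all $i$ for some vertex $x$ of $H_0$ — because any $v$ with $|v_i-x_i|\le 1$ for all $i$ has $\rho(v,x)=h(v,x)\le n$, while any $v$ failing this for every $x\in V(H_0)$ has $\rho(v,H_0)=n+1>n$. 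Hence $W_{n,H_0,n}$ is exactly the ``inflation'' of the box $H_0$ by one unit in every coordinate direction, i.e. the $n$-box with side lengths $(c_i-2)+2=c_i$ in direction $i$; this is the $n$-parallelotope with vertices $O$ and $c_1\cdots c_n$ foreshadowed by the choice of the $c_i$.

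Next I would carry out the tiling verification. Since each $W_{n,H,n}$ is a combinatorial $n$-box of side lengths $c_1,\dots,c_n$ and the lattice $L$ is generated by $(1+c_i)e_i=(c_i+1)e_i$... one must be careful here: a box of side length $c_i$ occupies $c_i+1$ lattice points in direction $i$ (endpoints included), so consecutive translates by $(c_i+1)e_i$ are \emph{disjoint} but leave no gap, giving a partition. Concretely, I would show the map sending $v=(v_1,\dots,v_n)\in\mathbb{Z}^n$ to the unique $z\in L$ with $z_i\equiv v_i \pmod{c_i+1}$ and $0\le v_i-z_i\le c_i$ identifies the $L$-translate of $W_{n,H_0,n}$ containing $v$; this is just the product of the one-dimensional facts that $\mathbb{Z}=\bigsqcup_{k}\{k(c_i+1),\dots,k(c_i+1)+c_i\}$. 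So $\{W_{n,H,n}\}$ partitions $\mathbb{Z}^n$, i.e. $S$ is a $t$-PTMC with $t=n$, and it is lattice via $L_S$ and of the form $n$-PTMC$[H]$ since all components of $[S]$ are translates of the single $n$-box $H$.

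Then I would check the two remaining assertions of the statement. For ``components of $[S]$ are exactly the translates of $H$'': two translates $H_0+z$ and $H_0+z'$ with $z,z'\in L$ distinct are at $\ell_1$-distance at least... this is the minimum-distance claim, so I fold it in. In direction $i$, the two boxes occupy intervals $[z_i, z_i+c_i-2]$ and $[z_i', z_i'+c_i-2]$ with $z_i-z_i'$ a multiple of $c_i+1$; if this multiple is nonzero the gap between the intervals in that coordinate is at least $(c_i+1)-(c_i-2)=3$, and if it is zero the coordinates coincide. Since $z\ne z'$ at least one coordinate differs, so the $\ell_1$-distance between $V(H_0+z)$ and $V(H_0+z')$ is at least $3$, proving both that the $H_0+z$ are genuinely the connected components of $[S]$ (distance $\ge 3 > 1$) and the stated minimum-$\ell_1$-distance $3$. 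Finally, uniqueness of the nearest codepoint in the PTMC definition follows because each $v$ lies in exactly one $W_{n,H,n}$ and within that box the nearest vertex of the center is the coordinatewise-clamped point, which is unique; one should note $\rho(v,s)=n+1$ for every $s$ in any \emph{other} component, so no ties arise across components.

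The main obstacle I anticipate is purely bookkeeping: matching the off-by-one conventions (path of length $c_i-2$ versus $c_i-1$ vertices versus side-length $c_i$ of the inflated box versus lattice period $c_i+1$) so that the product decomposition genuinely partitions $\mathbb{Z}^n$ with no overlap and no gap, and simultaneously delivers exactly the gap $3$ claimed. Everything else reduces, via the product structure of both $H$ and $\rho$ restricted to $W_{n,H,n}$, to the trivial one-dimensional tiling $\mathbb{Z}=\bigsqcup_k[k(c_i+1),k(c_i+1)+c_i]$; the only subtlety is confirming that the inflated box really is the full truncated $n$-sphere, for which the ``otherwise $=n+1$'' clause of $\rho$ is exactly what forces $W_{n,H,n}$ to be a box rather than a larger cross-polytope-like region, and hence what makes the lattice tiling possible for $t=n$.
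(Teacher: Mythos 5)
Your proposal is correct and follows essentially the same route as the paper, which only sketches the construction (anchors, the lattice $L_S$ generated by the $(1+c_i)e_i$, and the partition into boxes $W_{n,H',n}$) in the paragraph preceding the theorem and defers details to the cited reference. Your identification of the truncated $n$-sphere around an $n$-box as its unit inflation, the coordinatewise one-dimensional tiling $\mathbb{Z}=\bigsqcup_k\{k(c_i+1),\dots,k(c_i+1)+c_i\}$, and the clamping argument for uniqueness of the nearest codeword supply exactly the verifications the paper leaves implicit.
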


\begin{theorem}\label{t3} \cite{rainbow}
There exists a lattice $1$-{\rm PTMC}$[Q^2_2,Q^2_0;2,2]$ (in particular a {\rm PDS}) $S$ in $\Lambda_3$ with minimum $\ell_1$-distance 3 between the components of $[S]$.
\end{theorem}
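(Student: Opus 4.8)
The plan is to make the ``rainbow coloring'' construction of \cite{rainbow} explicit as a $\rho$-sphere tiling of $\mathbb{Z}^3$ and then reduce correctness to a finite check modulo a lattice. The index of the recovered lattice $L_S$ is forced: by Theorem~\ref{rain} and the conventions of Subsection~\ref{FR}, the partition of $\mathbb{Z}^3$ associated with $S$ consists of $L_S$-translates of $m_0=2$ copies of the truncated $1$-sphere of a $2$-box $Q^2_2$ (with edges parallel to $Oe_1$ and $Oe_2$) and of $m_1=2$ copies of the truncated $1$-sphere of a single vertex. Such a square's sphere has $4+16=20$ vertices (the four square vertices, their $8$ in-plane lateral $\rho$-neighbours, and the two $4$-vertex caps one step up and one step down) and such a vertex's sphere has $1+6=7$ vertices, whence $[\mathbb{Z}^3:L_S]=2\cdot 20+2\cdot 7=54$. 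I would therefore first exhibit an explicit index-$54$ lattice $L_S\subset\mathbb{Z}^3$ (the construction of \cite{rainbow} is organised by the value of the last coordinate taken mod $3$; see the schematic in Remark~\ref{apedido}), together with its four base components: two $2$-boxes $H_0^1,H_0^2$ congruent to $Q^2_2$ with edges parallel to $Oe_1$ and $Oe_2$, and two single vertices $H_1^1,H_1^2$. The code is then $S=\bigcup_{z\in L_S}\bigl(\{H_0^1,H_0^2,H_1^1,H_1^2\}+z\bigr)$.

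Next I would carry out the verification, all of which is finite once one works in the group $\mathbb{Z}^3/L_S$ of order $54$. \textbf{(a)} \emph{Tiling.} Check that the four truncated $1$-spheres $(H_0^1)^1,(H_0^2)^1,(H_1^1)^1,(H_1^2)^1$ are pairwise disjoint and disjoint from all of their $L_S$-translates; since $20+20+7+7=54=[\mathbb{Z}^3:L_S]$, this packing is automatically a partition of $\mathbb{Z}^3$, which is exactly the partition clause in the definition of a $\kappa$-PTMC. \textbf{(b)} \emph{Unique closest codeword.} Once (a) holds, each $u\in\mathbb{Z}^3$ lies in a unique sphere $(H)^1$, so $\rho(u,S)=\rho(u,H)\le 1$ is attained only inside $H$; it remains to observe that a vertex lying outside $H$ but at $\rho$-distance $1$ from $H$ has a single closest vertex of $H$ when $H$ is a single vertex (trivial) or a $2$-box $Q^2_2$ (a vertex obtained from a vertex $s$ of the square by one step up, down, or outward to the side is at $\rho$-distance $1$ from $s$ and at $\rho$-distance $\ge 2$ from the other three). \textbf{(c)} \emph{Components and distances.} Since adjacency in $\Lambda_3$ is $\ell_1$-distance $1$, checking that the minimum $\ell_1$-distance among the four base components and all their $L_S$-translates equals $3$ both gives the distance claim and shows that $[S]$ has precisely the announced components: each intended $2$-box is a $4$-cycle $Q^2_2$ of $\Lambda_3$ that merges with nothing, and each intended vertex is isolated. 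Together, (a)--(c) show that $S$ is a $1$-PTMC$[Q^2_2,Q^2_0;2,2]$; it is in particular a PDS since all radii equal $1$ (Remark~\ref{re}(II)), and choosing one distinguished vertex in each tile makes it ``lattice'' in the sense of Subsection~\ref{FR}.

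The step I expect to be the main obstacle is (a): producing the correct $L_S$ together with the relative placement of the two squares and two points so that their $20$-cell and $7$-cell $\rho$-spheres interlock without gaps or overlaps. This is the genuine content of the rainbow coloring of \cite{rainbow}, and once it has been written down the checks in (b) and (c) are mechanical. A convenient way to organise the bookkeeping, implicit in \cite{rainbow}, is to describe the colouring of $\mathbb{Z}^3$ that sends each vertex to the component of $S$ dominating it: perfectness of $S$ is then the single assertion that this colouring is well defined with colour classes exactly the truncated spheres, and that is what the rainbow property supplies.
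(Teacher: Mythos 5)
Your framework is sound and your numerology is right: the truncated $1$-sphere of a unit square $Q^2_2$ in $\Lambda_3$ does have $4+8+4+4=20$ vertices, that of a single vertex has $7$, and hence $[\mathbb{Z}^3:L_S]=2\cdot 20+2\cdot 7=54$; moreover the reductions in your steps (a)--(c) are all correct (a packing whose total volume equals the lattice index is automatically a tiling; inside a sphere the nearest central vertex is unique for both a point and a square; the $\ell_1$-distance-$3$ check simultaneously certifies the component structure). The problem is that the witness is never produced. You say you ``would first exhibit an explicit index-$54$ lattice $L_S$ together with its four base components'' and then concede that producing this interlocking of two $20$-cell and two $7$-cell spheres ``is the genuine content of the rainbow coloring of \cite{rainbow}.'' Since the theorem is an existence statement, a verification template with the configuration left blank does not prove it; this is a genuine gap, and it is exactly the one you flagged. (The paper itself also only cites \cite{rainbow} and points at Figure 1 rather than writing the construction out, so your skeleton is consistent with the intended argument, but a self-contained proof must supply the explicit $L_S$ and the placement of the two squares and two points.)

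If you want to close the gap, a layer count pins down the shape of the answer and reduces the search to a small finite problem. Work in the quotient $C_6\square C_6\square C_3$ of Remark~\ref{apptor} (with all $k_i=1$), which contains $108/54=2$ fundamental regions, hence $4$ squares and $4$ points. A square contributes $12$ cells of its sphere to its own $z$-layer and $4$ to each adjacent layer; a point contributes $5$ and $1$. If layer $z=i$ carries $s_i$ squares and $p_i$ points, then covering each $36$-cell layer exactly forces $2s_i+p_i=4$ for every $i$, so each layer's profile is one of $(2,0)$, $(1,2)$, $(0,4)$, with $\sum s_i=\sum p_i=4$. It remains to place the corresponding $12$-, $4$-, $5$- and $1$-cell footprints so that they tile each $6\times 6$ layer consistently across the three layers and realize the minimum $\ell_1$-distance $3$; writing down one such placement and running your checks (a)--(c) on the $54$ cosets is what is still missing.
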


To state Theorem~\ref{t4}, let $H_0=Q^2_{n-1}$, $H_1=Q^2_0$, $m_0=m_1=2$, $t_0=1$, $t_1=n-2$.

\begin{theorem}\label{t4} \label{rainbow} There exists a lattice $(1,n-2)$-{\rm PTMC}$[Q^2_{n-1},Q^2_0;2,2]$ $S$ in $\Lambda_n$.
\end{theorem}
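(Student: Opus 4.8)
The plan is to construct the code $S$ explicitly as a union of two translation classes and then verify the partition condition directly via the truncated distance $\rho$. For the $(n-1)$-cube components $H_0$, I take all translates of the subgraph induced by the vertices with entries in $\{0,1\}$ in the first $n-1$ coordinates and last coordinate fixed; for the $0$-cube components $H_1$, I take a suitable sublattice of singleton vertices. Concretely, I would first write down a candidate lattice $L_S\subset\mathbb{Z}^n$ together with a fundamental region, stipulating that the FR contains two copies of $Q^2_{n-1}$ (differing by a translation by $e_n$, say) and two copies of $Q^2_0$, so that $m_0=m_1=2$, and such that the truncated $1$-sphere $(H_0)^{1}$ around each $(n-1)$-cube together with the truncated $(n-2)$-sphere $(H_1)^{n-2}$ around each $0$-cube exhausts $\mathbb{Z}^n$ with no overlaps. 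The natural guess is to build $L_S$ from generators along the coordinate directions of lengths dictated by the combined ``volumes'' $|(H_0)^1|$ and $|(H_1)^{n-2}|$, mimicking the anchor-and-generator recipe used for Theorem~\ref{t2}; the key numerical check is that these two tile-volumes do add up correctly to $|\mathbb{Z}^n/L_S|$ via the identity $|(H)^t|=\sum_{i=0}^t 2^i\binom{n}{i}$ for the singleton case and the analogous formula for a box.

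The second and main step is to verify conditions (i$''$) and (ii$''$): that every $v\in\mathbb{Z}^n$ lies in exactly one truncated sphere, and that within that sphere there is a unique nearest central vertex $w$. For a vertex $v$ in a truncated $1$-sphere around an $(n-1)$-cube $H_0$, uniqueness of $w$ reduces to the fact that a vertex at $\rho$-distance $1$ from an $(n-1)$-cube (which is itself a Hamming ball structure along $n-1$ coordinates) can only reach it through a single coordinate flip in the $n$-th direction, so $w$ is forced. For a vertex $v$ in a truncated $(n-2)$-sphere around a $0$-cube $H_1=\{c\}$, the center is trivially unique, but I must check that $\rho(v,c)\le n-2$ characterizes exactly the vertices not already captured by some $H_0$-sphere; here the truncation at $n+1$ is essential because it keeps the $0$-cube sphere from being too large while still filling the ``corners'' left between consecutive $(n-1)$-cube slabs. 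I would organize this as a direct coordinate-wise case analysis: project onto the last coordinate mod the relevant period, and show that in each residue class the plane $\mathbb{Z}^{n-1}$ is partitioned either entirely by translates of $(Q^2_{n-1})^1$ or by a pattern of $(Q^2_0)^{n-2}$'s, reading the structure off the schematic in Figure~1.

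Finally, to upgrade ``$t$-PTMC'' to ``lattice $(t_0,t_1)$-PTMC$[H_0,H_1;2,2]$'' I would exhibit the lattice $L_S$ acting freely on the set of spheres, confirm that $H_0,H_1$ are non-parallel (one is an $(n-1)$-box, the other a $0$-box), and package the two FR pieces $H_0^*$ and $H_1^*$ as in items (a$'$)--(b$'$) of Subsection~\ref{FR}, checking that each $H_i^*$ is connected and that the copies $(H_i^j)^{\kappa(\langle H\rangle)}$ are pairwise disjoint. I expect the main obstacle to be the disjointness/covering bookkeeping in the mixed region where truncated $1$-spheres of $(n-1)$-cubes abut truncated $(n-2)$-spheres of $0$-cubes: one has to rule out a vertex being simultaneously within $\rho$-distance $1$ of some $(n-1)$-cube and $\rho$-distance $n-2$ of some $0$-cube, which forces a careful argument that the ``extra'' coordinate spread needed to leave one sphere already exceeds the truncation-relevant budget of the other. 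For $n=3$ this collapses to Theorem~\ref{t3} (where $t_1=n-2=1$, so both radii are $1$ and $S$ is a genuine PDS), which I would use as the base case and sanity check of the general construction.
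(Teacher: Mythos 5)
The paper itself contains no proof of Theorem~\ref{t4}: like Theorems~\ref{t2} and~\ref{t3} it is a restatement of a result established in \cite{rainbow}, supported in-paper only by the visualization in Remark~\ref{apedido} and Figure~1. Your proposal therefore has to stand on its own, and as written it is a strategy outline rather than a proof: the code $S$ is never exhibited. Everything hinges on ``a candidate lattice $L_S$ together with a fundamental region'' containing two copies of $Q^2_{n-1}$ and two of $Q^2_0$, but you never say where these components sit, and that placement is the entire content of the theorem --- once it is known, the verification of (i$''$) and (ii$''$) that you describe is routine bookkeeping.

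Worse, the ``natural guess'' you offer --- generators along the coordinate directions with lengths dictated by the combined volumes, mimicking the anchor-and-generator recipe of Theorem~\ref{t2} --- does not work in the obvious way. The volume identity is indeed clean: $|(Q^2_{n-1})^1|=2^{n-1}(n+2)$ and $|(Q^2_0)^{n-2}|=\sum_{i=0}^{n-2}2^i\binom{n}{i}=3^n-2^{n-1}(n+2)$, so one sphere of each type has total cardinality $3^n$. But these two spheres cannot tile a single $3\times\cdots\times3$ box: placing the $0$-cube at the centre $(1,\ldots,1)$ of $\{0,1,2\}^n$, the complement of its truncated $(n-2)$-sphere is the set of vertices having at most one coordinate equal to $1$, and for $n\ge 3$ this set contains no $(n-1)$-cube at all, since any $(n-1)$-cube inside the box ranges over a $2$-set containing $1$ in each of $n-1\ge 2$ coordinates and hence has a vertex with two coordinates equal to $1$. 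The true pattern must interleave across boxes, consistent with Remark~\ref{apptor}, where the quotient is $C_{6k_1}\square\cdots\square C_{6k_{n-1}}\square C_{3k_n}$ --- period $6$, not $3$, in $n-1$ of the directions. Two smaller points: a vertex at truncated distance $1$ from an $(n-1)$-cube need not be reached ``through a single coordinate flip in the $n$-th direction''; it can equally be an outward neighbour in one of the $n-1$ spanned directions (uniqueness of the nearest central vertex $w$ still holds, but your stated reason is not the right one). Your sanity check is correct, though: for $n=3$ the statement specializes exactly to Theorem~\ref{t3}.
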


\begin{remark}\label{apptor}
From the last observation in Remark~\ref{remark}, it can be deduced that the respective PTMC $S$ covers (via canonical projections) in Theorems~\ref{t2},  \ref{t3} and  \ref{t4}, respectively:
\begin{enumerate}
\item[$\bullet$]
an $n${\rm-PTMC}$[H]$ in
${\mathcal T}=$
$C_{c_1k_1}\square C_{c_2k_2}\square\cdots\square C_{c_nk_n}, (1<k_i\in\mathbb{Z},\forall i\in[n]);$
\item[$\bullet$]
a $1$-{\rm PTMC}$[Q^2_2,Q^2_0;2,2]$ in ${\mathcal T}=$
$C_{6k_1}\square C_{6k_2}\square C_{3k_3}, (0<k_i, \forall i\in[3]);$
\item[$\bullet$] a $(1,n-2)$-{\rm PTMC}$[Q^2_{n-1},Q^2_0;2,2]$ in  ${\mathcal T}=$
$C_{6k_1}\square\dots\square C_{6k_{n-1}}\square C_{3k_n}, (0<k_i, \forall i\in[n]).$
\end{enumerate}
\end{remark}

\section[\bf 4.]{\bf Ternary multilattice graphs}\label{s4}

Let the 2-tercube, or {\it tersquare}, $Q_2^3$ be denoted $[\emptyset]$, with vertices given by the 2-tuples $xy$, ($x,y\in F_3=\{0,1,2\}$).
As a graph, $[\emptyset]=(0,1,2)\square(0,1,2),$ namely the cartesian product of two triangles whose vertex sets are both $F_3$, is represented at the center of Figure 2 sharing a triangle with each one of 15 tersquares, to be specified below, in respective partially viewable colored backgrounds.
The {\it 1-sub-tercubes} or {\it triangles} $Q_1^3$ of the 2-tercube $[\emptyset]$ have vertex sets
$\{0y\}_{y\in F_3}$, $\{1y\}_{y\in F_3}$, $\{2y\}_{y\in F_3}$, $\{x0\}_{x\in F_3}$, $\{x1\}_{x\in F_3}$, $\{x2\}_{x\in F_3}$.
They will be indicated $x^0$, $x^1$, $x^2$, $y^0$, $y^1$, $y^2$, respectively.
To each of these triangles $t^s$ ($t\in\{x,y\};s\in F_3$) of $[\emptyset]$ we glue a corresponding tersquare $[^t_s]$ intersecting $[\emptyset]$ exactly in $t^s$.

{\center{\bf TABLE I}
$$\begin{array}{||ccccccccccc||ccccccccccc||}\hline\hline
01\!\!\!&\!\!\!-\!\!\!&\!\!\!11\!\!\!&\!\!\!-\!\!\!&\!\!\!01\!\!\!&\!\!\!-\!\!\!&\!\!\!11\!\!\!&\!\!\!-\!\!\!&\!\!\!01\!\!\!&\!\!\!-\!\!\!&\!\!\!11&02\!\!\!&\!\!\!-\!\!\!&\!\!\!22\!\!\!&\!\!\!-\!\!\!&\!\!\!02\!\!\!&\!\!\!-\!\!\!&\!\!\!22\!\!\!&\!\!\!-\!\!\!&\!\!\!02\!\!\!&\!\!\!-\!\!\!&\!\!\!22\\
 |\!\!\!&\!\!\![^{xxyy}_{0110}]\!\!\!&\!\!\!|\!\!\!&\!\!\![^{xyy}_{010}]\!\!\!&\!\!\!|\!\!\!&\!\!\![^{yy}_{10}]\!\!\!&\!\!\!|\!\!\!&\!\!\![^{xyy}_{110}]\!\!\!&\!\!\!|\!\!\!&\!\!\![^{xxyy}_{1010}]\!\!\!&\!\!\!|& |\!\!\!&\!\!\![^{xxyy}_{0220}]\!\!\!&\!\!\!|\!\!\!&\!\!\![^{xyy}_{020}]\!\!\!&\!\!\!|\!\!\!&\!\!\![^{yy}_{20}]\!\!\!&\!\!\!|\!\!\!&\!\!\![^{xyy}_{220}]\!\!\!&\!\!\!|\!\!\!&\!\!\![^{xxyy}_{2020}]\!\!\!&\!\!\!|\\
00\!\!\!&\!\!\!-\!\!\!&\!\!\!10\!\!\!&\!\!\!-\!\!\!&\!\!\!00\!\!\!&\!\!\!-\!\!\!&\!\!\!10\!\!\!&\!\!\!-\!\!\!&\!\!\!00\!\!\!&\!\!\!-\!\!\!&\!\!\!10&  00\!\!\!&\!\!\!-\!\!\!&\!\!\!20\!\!\!&\!\!\!-\!\!\!&\!\!\!00\!\!\!&\!\!\!-\!\!\!&\!\!\!20\!\!\!&\!\!\!-\!\!\!&\!\!\!00\!\!\!&\!\!\!-\!\!\!&\!\!\!20\\
 |\!\!\!&\!\!\![^{xxy}_{011}]\!\!\!&\!\!\!|\!\!\!&\!\!\![^{xy}_{01}]\!\!\!&\!\!\!|\!\!\!&\!\!\![^y_1]\!\!\!&\!\!\!|\!\!\!&\!\!\![^{xy}_{11}]\!\!\!&\!\!\!|&[^{xxy}_{101}]\!\!\!&\!\!\!|\!\!\!&\!\!\! |\!\!\!&\!\!\![^{xxy}_{022}]\!\!\!&\!\!\!|\!\!\!&\!\!\![^{xy}_{02}]\!\!\!&\!\!\!|\!\!\!&\!\!\![^{y}_{2}]\!\!\!&\!\!\!|\!\!\!&\!\!\![^{xy}_{22}]\!\!\!&\!\!\!|\!\!\!&\!\!\![^{xxy}_{202}]\!\!\!&\!\!\!|\\
01\!\!\!&\!\!\!-\!\!\!&\!\!\!11\!\!\!&\!\!\!-\!\!\!&\!\!\!01\!\!\!&\!\!\!-\!\!\!&\!\!\!11\!\!\!&\!\!\!-\!\!\!&\!\!\!01\!\!\!&\!\!\!-\!\!\!&\!\!\!11& 02\!\!\!&\!\!\!-\!\!\!&\!\!\!22\!\!\!&\!\!\!-\!\!\!&\!\!\!02\!\!\!&\!\!\!-\!\!\!&\!\!\!22\!\!\!&\!\!\!-\!\!\!&\!\!\!02\!\!\!&\!\!\!-\!\!\!&\!\!\!22\\
 |\!\!\!&\!\!\![^{xx}_{01}]\!\!\!&\!\!\!|\!\!\!&\!\!\![^x_0]\!\!\!&\!\!\!|\!\!\!&\!\!\![\emptyset]\!\!\!&\!\!\!|\!\!\!&\!\!\![^x_1]\!\!\!&\!\!\!|\!\!\!&\!\!\![^{xx}_{10}]\!\!\!&\!\!\!|& |\!\!\!&\!\!\![^{xx}_{02 }]\!\!\!&\!\!\!|\!\!\!&\!\!\![^{x}_{0}]\!\!\!&\!\!\!|\!\!\!&\!\!\![\emptyset]\!\!\!&\!\!\!|\!\!\!&\!\!\![^{x}_{2}]\!\!\!&\!\!\!|\!\!\!&\!\!\![^{xx}_{20}]\!\!\!&\!\!\!|\\
00\!\!\!&\!\!\!-\!\!\!&\!\!\!10\!\!\!&\!\!\!-\!\!\!&\!\!\!00\!\!\!&\!\!\!-\!\!\!&\!\!\!10\!\!\!&\!\!\!-\!\!\!&\!\!\!00\!\!\!&\!\!\!-\!\!\!&\!\!\!10& 00\!\!\!&\!\!\!-\!\!\!&\!\!\!20\!\!\!&\!\!\!-\!\!\!&\!\!\!00\!\!\!&\!\!\!-\!\!\!&\!\!\!20\!\!\!&\!\!\!-\!\!\!&\!\!\!00\!\!\!&\!\!\!-\!\!\!&\!\!\!20\\
 |\!\!\!&\!\!\![^{xxy}_{010}]\!\!\!&\!\!\!|\!\!\!&\!\!\![^{xy}_{00}]\!\!\!&\!\!\!|\!\!\!&\!\!\![^y_0]\!\!\!&\!\!\!|\!\!\!&\!\!\![^{xy}_{10}]\!\!\!&\!\!\!|\!\!\!&\!\!\![^{xxy}_{100}]\!\!\!&\!\!\!|& |\!\!\!&\!\!\![^{xxy}_{020}]\!\!\!&\!\!\!|\!\!\!&\!\!\![^{xy}_{00}]\!\!\!&\!\!\!|\!\!\!&\!\!\![^{y}_{0}]\!\!\!&\!\!\!|\!\!\!&\!\!\![^{xy}_{20}]\!\!\!&\!\!\!|\!\!\!&\!\!\![^{xxy}_{200}]\!\!\!&\!\!\!|\\
10\!\!\!&\!\!\!-\!\!\!&\!\!\!11\!\!\!&\!\!\!-\!\!\!&\!\!\!10\!\!\!&\!\!\!-\!\!\!&\!\!\!11\!\!\!&\!\!\!-\!\!\!&\!\!\!10\!\!\!&\!\!\!-\!\!\!&\!\!\!11&  20\!\!\!&\!\!\!-\!\!\!&\!\!\!22\!\!\!&\!\!\!-\!\!\!&\!\!\!20\!\!\!&\!\!\!-\!\!\!&\!\!\!22\!\!\!&\!\!\!-\!\!\!&\!\!\!20\!\!\!&\!\!\!-\!\!\!&\!\!\!22\\
 |\!\!\!&\!\!\![^{xxyy}_{0101}]\!\!\!&\!\!\!|\!\!\!&\!\!\![^{xyy}_{001}]\!\!\!&\!\!\!|\!\!\!&\!\!\![^{yy}_{01}]\!\!\!&\!\!\!|\!\!\!&\!\!\![^{xyy}_{101}]\!\!\!&\!\!\!|\!\!\!&\!\!\![^{xxyy}_{1001}]\!\!\!&|\!\!\!& |\!\!\!&\!\!\![^{xxyy}_{0202}]\!\!\!&\!\!\!|\!\!\!&\!\!\![^{xyy}_{002}]\!\!\!&\!\!\!|\!\!\!&\!\!\![^{yy}_{02}]\!\!\!&\!\!\!|\!\!\!&\!\!\![^{xyy}_{202}]\!\!\!&\!\!\!|\!\!\!&\!\!\![^{xxyy}_{2002}]\!\!\!&\!\!\!|\\
00\!\!\!&\!\!\!-\!\!\!&\!\!\!01\!\!\!&\!\!\!-\!\!\!&\!\!\!00\!\!\!&\!\!\!-\!\!\!&\!\!\!01\!\!\!&\!\!\!-\!\!\!&\!\!\!00\!\!\!&\!\!\!-\!\!\!&\!\!\!01&  00\!\!\!&\!\!\!-\!\!\!&\!\!\!02\!\!\!&\!\!\!-\!\!\!&\!\!\!00\!\!\!&\!\!\!-\!\!\!&\!\!\!02\!\!\!&\!\!\!-\!\!\!&\!\!\!00\!\!\!&\!\!\!-\!\!\!&\!\!\!02\\\hline\hline

12\!\!\!&\!\!\!-\!\!\!&\!\!\!22\!\!\!&\!\!\!-\!\!\!&\!\!\!12\!\!\!&\!\!\!-\!\!\!&\!\!\!22\!\!\!&\!\!\!-\!\!\!&\!\!\!12\!\!\!&\!\!\!-\!\!\!&\!\!\!22&  \!\!\!&\!\!\!\!\!\!&\!\!\!\!\!\!&\!\!\!\!\!\!&\!\!\!\!\!\!&\!\!\!\!\!\!&\!\!\!\!\!\!&\!\!\!\!\!\!&\!\!\! \!\!\!&\!\!\!\!\!\!&\!\!\!\\
 |\!\!\!&\!\!\![^{xxyy}_{1221}]\!\!\!&\!\!\!|\!\!\!&\!\!\![^{xyy}_{121}]\!\!\!&\!\!\!|\!\!\!&\!\!\![^{yy}_{21}]\!\!\!&\!\!\!|\!\!\!&\!\!\![^{xyy}_{221}]\!\!\!&\!\!\!|\!\!\!&\!\!\![^{xxyy}_{2121}]\!\!\!&\!\!\!| &\!\!\!&\!\!\!\!\!\!&\!\!\!\!\!\!&\!\!\!\!\!\!&\!\!\!\!\!\!&\!\!\!\!\!\!&\!\!\!\!\!\!&\!\!\!\!\!\!&\!\!\! \!\!\!&\!\!\!\!\!\!&\!\!\!\\
11\!\!\!&\!\!\!-\!\!\!&\!\!\!21\!\!\!&\!\!\!-\!\!\!&\!\!\!11\!\!\!&\!\!\!-\!\!\!&\!\!\!21\!\!\!&\!\!\!-\!\!\!&\!\!\!11\!\!\!&\!\!\!-\!\!\!&\!\!\!21&  &\!\!\!\!\!\!&\!\!\!22\!\!\!&\!\!\!-\!\!\!&\!\!\!02\!\!\!&\!\!\!-\!\!\!&\!\!\!12\!\!\!&\!\!\!-\!\!\!&\!\!\!22\!\!\!&\!\!\!\!\!\!&\!\!\!\\
 |\!\!\!&\!\!\![^{xxy}_{122}]\!\!\!&\!\!\!|\!\!\!&\!\!\![^{xy}_{12}]\!\!\!&\!\!\!|\!\!\!&\!\!\![^y_2]\!\!\!&\!\!\!|\!\!\!&\!\!\![^{xy}_{22}]\!\!\!&\!\!\!|\!\!\!&\!\!\![^{xxy}_{212}]\!\!\!&\!\!\!|\!\!\!&\!\!\! \!\!\!&\!\!\!\!\!\!&\!\!\!|\!\!\!&\!\!\!\!\!\!&\!\!\!|\!\!\!&\!\!\!\!\!\!&\!\!\!|\!\!\!&\!\!\!\!\!\!&\!\!\!|\!\!\!&\!\!\!\!\!\!&\!\!\!\\
12\!\!\!&\!\!\!-\!\!\!&\!\!\!22\!\!\!&\!\!\!-\!\!\!&\!\!\!12\!\!\!&\!\!\!-\!\!\!&\!\!\!22\!\!\!&\!\!\!-\!\!\!&\!\!\!12\!\!\!&\!\!\!-\!\!\!&\!\!\!22&&\!\!\!\!\!\!&\!\!\!21\!\!\!&\!\!\!-\!\!\!&\!\!\!01\!\!\!&\!\!\!-\!\!\!&\!\!\!11\!\!\!&\!\!\!-\!\!\!&\!\!\!21\!\!\!&\!\!\!\!\!\!&\!\!\!\\
 |\!\!\!&\!\!\![^{xx}_{12}]\!\!\!&\!\!\!|\!\!\!&\!\!\![^x_1]\!\!\!&\!\!\!|\!\!\!&\!\!\![\emptyset]\!\!\!&\!\!\!|\!\!\!&\!\!\![^x_2]\!\!\!&\!\!\!|\!\!\!&\!\!\![^{xx}_{21}]\!\!\!&\!\!\!|\!\!\!&\!\!\! \!\!\!&\!\!\!\!\!\!&\!\!\!|\!\!\!&\!\!\!\!\!\!&\!\!\!|\!\!\!&\!\!\![\emptyset]\!\!\!&\!\!\!|\!\!\!&\!\!\!\!\!\!&\!\!\!|\!\!\!&\!\!\!\!\!\!&\!\!\!\\
11\!\!\!&\!\!\!-\!\!\!&\!\!\!21\!\!\!&\!\!\!-\!\!\!&\!\!\!11\!\!\!&\!\!\!-\!\!\!&\!\!\!21\!\!\!&\!\!\!-\!\!\!&\!\!\!11\!\!\!&\!\!\!-\!\!\!&\!\!\!21
&&\!\!\!\!\!\!&\!\!\!20\!\!\!&\!\!\!-\!\!\!&\!\!\!00\!\!\!&\!\!\!-\!\!\!&\!\!\!10\!\!\!&\!\!\!-\!\!\!&\!\!\!20\!\!\!&\!\!\!\!\!\!&\!\!\!\\
 |\!\!\!&\!\!\![^{xxy}_{121}]\!\!\!&\!\!\!|\!\!\!&\!\!\![^{xy}_{11}]\!\!\!&\!\!\!|\!\!\!&\!\!\![^y_1]\!\!\!&\!\!\!|\!\!\!&\!\!\![^{xy}_{21}]\!\!\!&\!\!\!|\!\!\!&\!\!\![^{xxy}_{211}]\!\!\!&\!\!\!|\!\!\!&\!\!\! \!\!\!&\!\!\!\!\!\!&\!\!\!|\!\!\!&\!\!\!\!\!\!&\!\!\!|\!\!\!&\!\!\!\!\!\!&\!\!\!|\!\!\!&\!\!\!\!\!\!&\!\!\!|\!\!\!&\!\!\!\!\!\!&\!\!\!\\
12\!\!\!&\!\!\!-\!\!\!&\!\!\!22\!\!\!&\!\!\!-\!\!\!&\!\!\!12\!\!\!&\!\!\!-\!\!\!&\!\!\!22\!\!\!&\!\!\!-\!\!\!&\!\!\!12\!\!\!&\!\!\!-\!\!\!&\!\!\!22
&&\!\!\!\!\!\!&\!\!\!22\!\!\!&\!\!\!-\!\!\!&\!\!\!02\!\!\!&\!\!\!-\!\!\!&\!\!\!12\!\!\!&\!\!\!-\!\!\!&\!\!\!22\!\!\!&\!\!\!\!\!\!&\!\!\!\\
 |\!\!\!&\!\!\![^{xxyy}_{1212}]\!\!\!&\!\!\!|\!\!\!&\!\!\![^{xyy}_{112}]\!\!\!&\!\!\!|\!\!\!&\!\!\![^{xy}_{12}]\!\!\!&\!\!\!|\!\!\!&\!\!\![^{xyy}_{212}]\!\!\!&\!\!\!|\!\!\!&\!\!\![^{xxyy}_{2112}]\!\!\!&\!\!\!|\!\!\!&\!\!\! \!\!\!&\!\!\!\!\!\!&\!\!\!\!\!\!&\!\!\!\!\!\!&\!\!\!\!\!\!&\!\!\!\!\!\!&\!\!\!\!\!\!&\!\!\!\!\!\!&\!\!\! \!\!\!&\!\!\!\!\!\!&\!\!\!\\
11\!\!\!&\!\!\!-\!\!\!&\!\!\!21\!\!\!&\!\!\!-\!\!\!&\!\!\!11\!\!\!&\!\!\!-\!\!\!&\!\!\!21\!\!\!&\!\!\!-\!\!\!&\!\!\!11\!\!\!&\!\!\!-\!\!\!&\!\!\!21 &&\!\!\!\!\!\!&\!\!\!\!\!\!&\!\!\!\!\!\!&\!\!\!\!\!\!&\!\!\!\!\!\!&\!\!\!\!\!\!&\!\!\!\!\!\!&\!\!\! \!\!\!&\!\!\!\!\!\!&\!\!\!\\\hline\hline
\end{array}$$}

This way, six tersquares $[^x_0], \ldots,[^y_2]$ are obtained (to be called {\it subcentral tersquares}, colored yellow and light-blue in Figure 2) that intersect $[\emptyset]$ respectively in the triangles $x^0, \ldots,y^2$.
Next, we glue to the remaining new triangles (i.e., other than those already in $[\emptyset]$), a set of nine new tersquares that we denote $[^{xy}_{00}], \ldots,[^{xy}_{22}]$ (and call {\it corner tersquares}, colored green and gray in Figure 2). These nine tersquares share merely a vertex with $[\emptyset]$ and their disposition with respect to the central tersquare $[\emptyset]$ and subcentral tersquares $x^0, \ldots,y^2$ is specified in Figure 2. 
In fact, $[^{xy}_{ij}]$ shares a triangle $y^j$ with $[^x_i]$ and a triangle $x^i$ with $[^y_j]$, for $i,j=0,1,2$. Such triangles can be further specified respectively as $y^j[^x_i]$ and $x^i[^y_j]$.

The graph $[[\emptyset]]$ resulting from the union of the $1+6+9=16$ tersquares constructed so far, will be referred to as the 2-{\it hive} $[[\emptyset]]$. The {\it central tersquare} $[\emptyset]$ of $[[\emptyset]]$ shares just one triangle with each of the six subcentral tercubes and just one vertex with each of the nine corner tersquares.
The tersquare $[\emptyset]$, given as a subgraph at the center of Figure 2, also results in the representation at the lower-right quarter of Table I by identifying
the quadruple of vertices labelled 22, as well as each pair of vertices labelled 20,  21,  02 and 12. 

The construction above continues with the iterative glueing of tersquares along {\it free} triangles, namely those triangles along which glueing of a new continuing tersquare was not already performed. In the limit of feasible glueings, a total compound graph$\Gamma_2$ of tersquares, glued in pairs of adjacent  tersquares along corresponding triangles, results in an infinite locally finite graph, the {\it ternary square compound} graph $\Gamma_2$, that we also call {\it ternary non-lattice multilattice graph}. Notation for all tersquares and triangles of $\Gamma_2$, continuing the notation of the 16 tersquares and their triangles above, is completed below. The word ``multilattice" here corresponds to the fact that, even though $\Gamma_n$ is not a lattice graph itself, it is such that each of its binary $n$-cubes is contained in a unique maximal lattice of $\Gamma_n$.

\begin{figure}[htp]
\hspace*{3mm}
\includegraphics[scale=0.35]{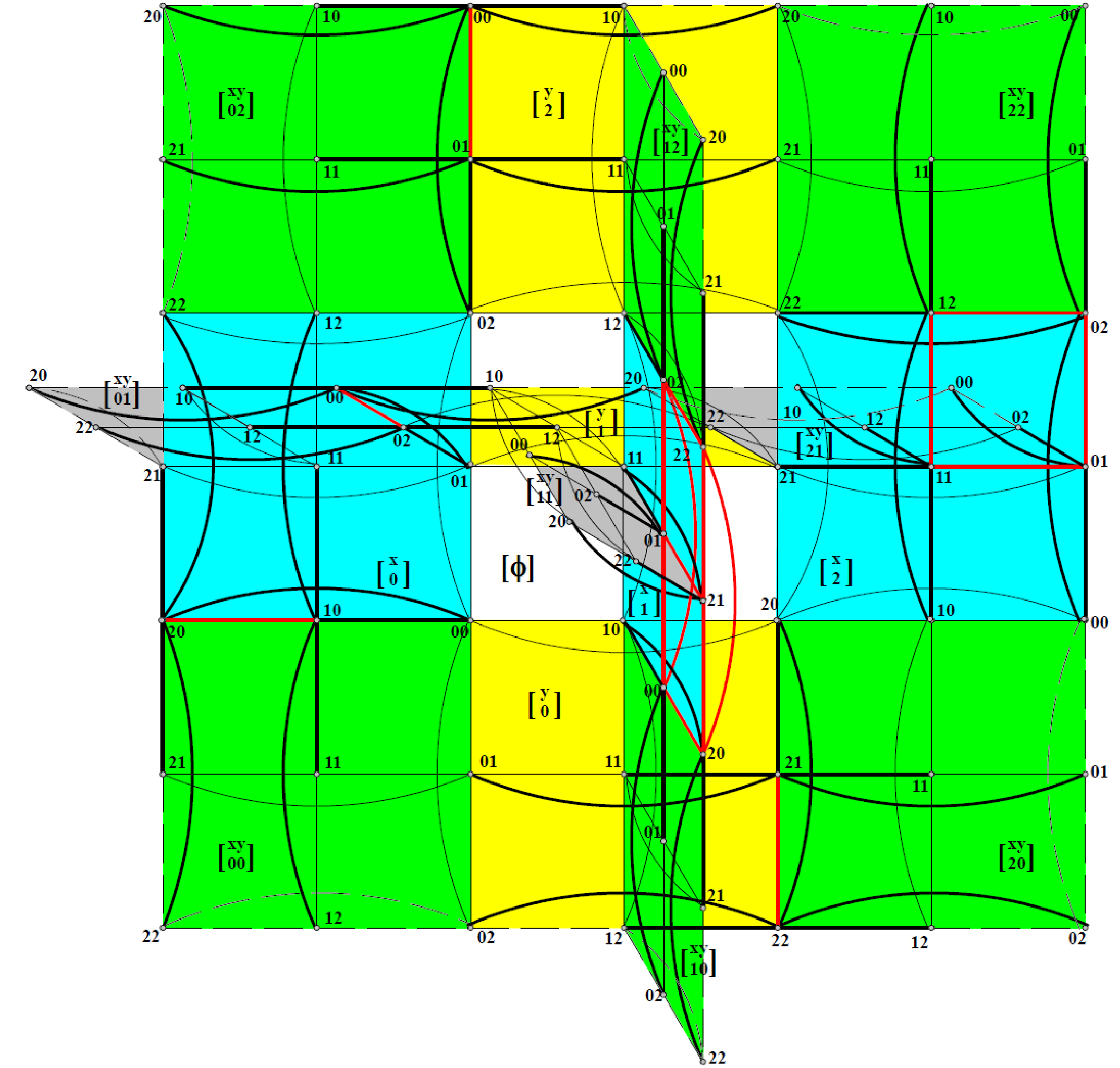}
\caption{Representing a PDS of $[[\emptyset]]$ in $\Gamma_2$}
\end{figure}

Three subgraphs of $\Gamma_2$ are represented in Table I, containing just one 4-cycle (of the 9 in $Q_2^3$) per participating (but not totally shown) glued tersquare. This yields 9 sub-lattice types in $\Gamma_2$ (one per 4-cycle of $Q_2^3$, three partially shown in Table I), each isomorphic to $\Lambda_2$. Thus, $\Gamma_2$ is a superset of each of the copies of $\Lambda_2$ generated by some 4-cycle of $\Gamma_2$. We further specify $\Gamma_2$ after presenting a non-isolated PDS in $[[\emptyset]]$ (Theorem~\ref{6c}).

\begin{figure}[htp]
\hspace*{21mm}
\includegraphics[scale=0.54]{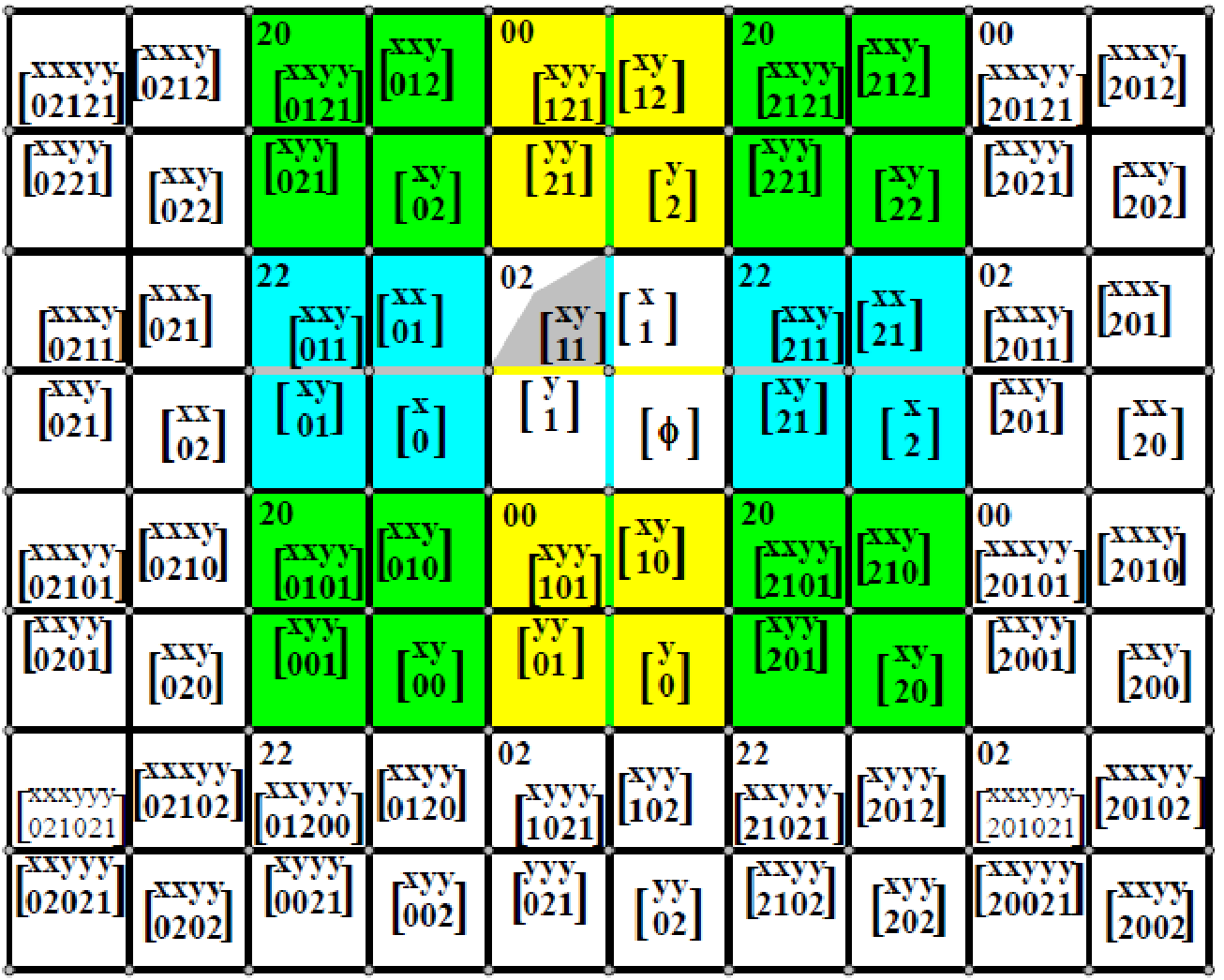}
\caption{Planar projection of a portion of $\Gamma_2$ larger than in Figure 2}
\end{figure}

The large connected graph in Figure 2 represents the 2-hive $[[\emptyset]]$, with $[^x_0]$, $[^x_1]$ and $[^x_2]$ in light-blue background, $[^y_0]$, $[^y_1]$ and $[^x_2]$ in yellow background, and the nine tersquares $[^{xy}_{i\,j}]$ in green background if $y\ne 1$ and light-gray background if $y=1$, where the apparent colored areas spatial disposition results in the partial hiding of some of these areas.
The thick red edges induce a subgraph of $[[\emptyset]]$ whose vertex set is a non-isolated PDS $S$ (as defined in Remark~\ref{ter}) of $[[\emptyset]]$. The dominating edges are drawn in thick black trace, allowing the reader to verify that all vertices of $[[\emptyset]]$ not in $S$ are dominated as indicated in Remark~\ref{ter}.
(It is elementary to verify that no isolated PDS in $[[\emptyset]]$ exists). The remaining edges of $[[\emptyset]]$, not in thick trace, are in dashed  trace if they are ``exterior'' edges of $[[\emptyset]]$ (i.e., those shared by 2-hives other than $[[\emptyset]]$ in $\Gamma_2$) and in thin trace otherwise (i.e., if they are ``interior'' edges of $[[\emptyset]]$).
Motivation for pursuing such a non-isolated PDS arose as a palliative remedy for the impossibility of having here a PDS like the Livingston-Stout PDS in the grid $P_4\square P_4$ \cite{LS}, which happens to be the only isolated PDS in grid graphs $P_m\square P_n$, for $2<min\{m,n\}$ \cite{DD}.

\begin{theorem}\label{6c}
There exists a non-isolated PDS, or 1-PTMC$[Q_1^2,Q_2^2,Q_1^3\square Q_1^2;4,$ $1,1]$,
in the 2-hive $[[\emptyset]]\subset\Gamma_2$.
\end{theorem}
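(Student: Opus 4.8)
The plan is to exhibit the set $S$ explicitly and then verify the required domination pattern by working through the sixteen tersquares of $[[\emptyset]]$ in an organized way. The hard part is purely bookkeeping: $[[\emptyset]]$ has $1+6+9=16$ tersquares sharing triangles and vertices according to Table~I and Figure~2, so one must track which vertices are identified across tersquares and confirm that every vertex outside $S$ is dominated either by a single vertex of $S$ or by both endpoints of an edge of $S$ (the non-isolated PDS condition of Remark~\ref{ter}).

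\emph{Step 1 (define $S$).} I would read off $S$ from the thick red edges in Figure~2, recording it tersquare-by-tersquare. The claim $1$-PTMC$[Q_1^2,Q_2^2,Q_1^3\square Q_1^2;4,1,1]$ tells us exactly what the induced components of $[S]$ must look like: there are three translation classes of components, namely a single edge $Q_1^2$ (a $K_2$ inside some triangle, with truncated radius $4$ so its truncated sphere is large), a $4$-cycle $Q_2^2$ (with radius $1$), and a $Q_1^3\square Q_1^2$, i.e. a triangular prism (with radius $1$); the bracketed $m$-values $4,1,1$ (read against the $\kappa$-PTMC$[H_0,H_1,H_2;m_0,m_1,m_2]$ formalism of Subsection~\ref{FR}) record how many copies of each appear. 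So Step~1 is to write down the vertex set of each such component concretely in the $xy$/$[{}^t_s]$ coordinates of Section~\ref{s4}, and to check that these vertex sets induce in $[[\emptyset]]$ exactly a $K_2$, a $C_4$, and a prism respectively (no extra edges) — using the fact that a triangle $t^s$ glued between two tersquares contributes its three edges to both.

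\emph{Step 2 (cover).} For each of the $16$ tersquares I would list its $9$ vertices (modulo the identifications forced by shared triangles/vertices, e.g. the quadruple labelled $22$ in the lower-right block of Table~I), strike out those in $S$, and for each remaining vertex name a neighbour in $S$. One has to be careful at the shared triangles and the nine corner-vertices, since a vertex may get its dominator from a neighbouring tersquare; the triangle-structure means each non-$S$ vertex has degree $4$ inside $[[\emptyset]]$ ($2$ from each of the two tersquares through it, or boundary-adjusted on the exterior), so checking one dominator per vertex is a finite, if tedious, task. The truncated-radius-$4$ component $Q_1^2$ deserves separate attention: its truncated sphere $(Q_1^2)^4=\{u:\rho(u,Q_1^2)\le 4\}$ must be computed with the truncated metric $\rho$ of Section~\ref{ss3}, and one must verify it picks up precisely the vertices of $[[\emptyset]]$ not reached by the radius-$1$ spheres of the other components.

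\emph{Step 3 (packing / uniqueness).} Finally I would verify disjointness: the truncated spheres $(H)^{\kappa(\langle H\rangle)}$ over the components $H$ of $[S]$ must partition $V([[\emptyset]])$, equivalently each non-$S$ vertex has a \emph{unique} nearest component and a unique nearest vertex in it at $\rho$-distance $\le$ the relevant radius. Given Step~2 produced a cover, it suffices to count: $|V([[\emptyset]])|$ equals the sum of the truncated-sphere sizes, where $|(Q_2^2)^1|$ and $|(Q_1^3\square Q_1^2)^1|$ are small explicit numbers and $|(Q_1^2)^4|$ is computed from $\rho$; if the total matches $|V([[\emptyset]])|$ and Step~2 gave a cover, the cover is automatically a partition. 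I expect the main obstacle to be organizing Step~2 so that the boundary/exterior edges of $[[\emptyset]]$ (the dashed edges in Figure~2, shared with other $2$-hives of $\Gamma_2$) are handled correctly — domination must hold using only edges \emph{inside} $[[\emptyset]]$, and one must make sure no claimed dominator lies across an exterior edge; a clean way around this is to organize the verification by the nine sub-copies of $\Lambda_2$ (one per $4$-cycle of $Q_2^3$) mentioned after Table~I, reducing each check to a configuration already understood in the grid $\Lambda_2$.
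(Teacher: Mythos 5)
Your overall strategy --- read the set $S$ off the thick red edges of Figure~2, list its induced components, and then verify the non-isolated-PDS condition of Remark~\ref{ter} by finite bookkeeping over the sixteen tersquares --- is exactly what the paper does: its proof exhibits the six components explicitly (four single edges, lying in shared triangles of $[^y_2]$, $[^x_0]$, $[^y_0]$, $[^y_1]$ with corner tersquares; one $4$-cycle in $[^x_2]$; one triangular prism in $[^x_1]$) and then asserts that the edges issuing from these components cover all remaining vertices of $[[\emptyset]]$, as can be checked in the figure. Your Steps~2 and~3 are a reasonable, more explicit organization of that same finite verification.

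There is, however, one genuine misreading that would send part of your verification astray. In the notation $1$-PTMC$[Q_1^2,Q_2^2,Q_1^3\square Q_1^2;4,1,1]$, all three numbers after the semicolon are multiplicities $m_i$ in the sense of Subsection~\ref{FR} and Remark~\ref{ref1} (compare the $1$-PTMC$[Q^2_1;4]$ there): the code has \emph{four} edge components, one $4$-cycle and one prism, and --- since this is a $1$-PTMC --- every component has truncated radius $1$. You state this reading yourself, but then simultaneously assign the edge component ``truncated radius $4$'' and plan to compute the sphere $(Q_1^2)^4$ and check that it picks up whatever the radius-$1$ spheres miss. That step is both unnecessary and meaningless in this setting: for $n=2$ the truncated distance takes values in $\{0,1,2,3\}$ (with $3=n+1$ as the ``far'' value), so a radius-$4$ truncated sphere would be the entire vertex set and your Step~3 sphere-size count would collapse. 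The correct check is uniform at radius $1$: every vertex of $[[\emptyset]]\setminus S$ must be adjacent to exactly one vertex of $S$ or to both ends of an edge of $S$, with the four edges, the $4$-cycle and the prism treated identically. (A smaller inaccuracy: an interior vertex of $[[\emptyset]]$ lies on several shared triangles and has degree larger than $4$ --- up to $8$ --- so the uniqueness of the dominator is a real check, not an automatic consequence of low degree.) With these corrections your plan coincides with the paper's proof.
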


\begin{proof}
The claimed PDS in $[[\emptyset]]$ is formed by the vertex sets of the following components (of its induced subgraphs):
\begin{enumerate}
\item[\bf(a)] the edge $(00,01)$ of triangle $(00,01,02)$ shared by tersquares $[^y_2]$ and $[^{xy}_{02}]$;
\item[\bf(b)] the edge $(10,20)$ of triangle $(00,10,20)$ shared by tersquares $[^x_0]$ and $[^{xy}_{00}]$;
\item[\bf(c)] the edge $(21,22)$ of triangle $(20,21,22)$ shared by tersquares $[^y_0]$ and $[^{xy}_{20}]$;
\item[\bf(d)] the edge $(00,02)$ of triangle $(00,02,01)$ shared by tersquares $[^y_1]$ and $[^{xy}_{01}]$;
\item[\bf(e)] the 4-cycle $(02,12,11,01)$ of tersquare $[^x_2]$, where its edge $(02,12)$ (of triangle
$\Delta=(02,12,22)$) is shared (with $\Delta$) with tersquare $[^{xy}_{22}]$;

\item[\bf(f)] the triangular prism in $[^x_1]$ formed by triangles $(00,01,02)$ and $(20,$
$21,22)$ and edges $(00,20)$,
$(02,22)$ (of triangle $(02,22,12)$, shared by tersquare
$[^{xy}_{22}]$) and $(01,21)$ (of triangle $(01,21,11)$, shared by tersquare $[^{xy}_{11}]$).
\end{enumerate}

\begin{figure}[htp]
\hspace*{3.5mm}
\includegraphics[scale=0.35]{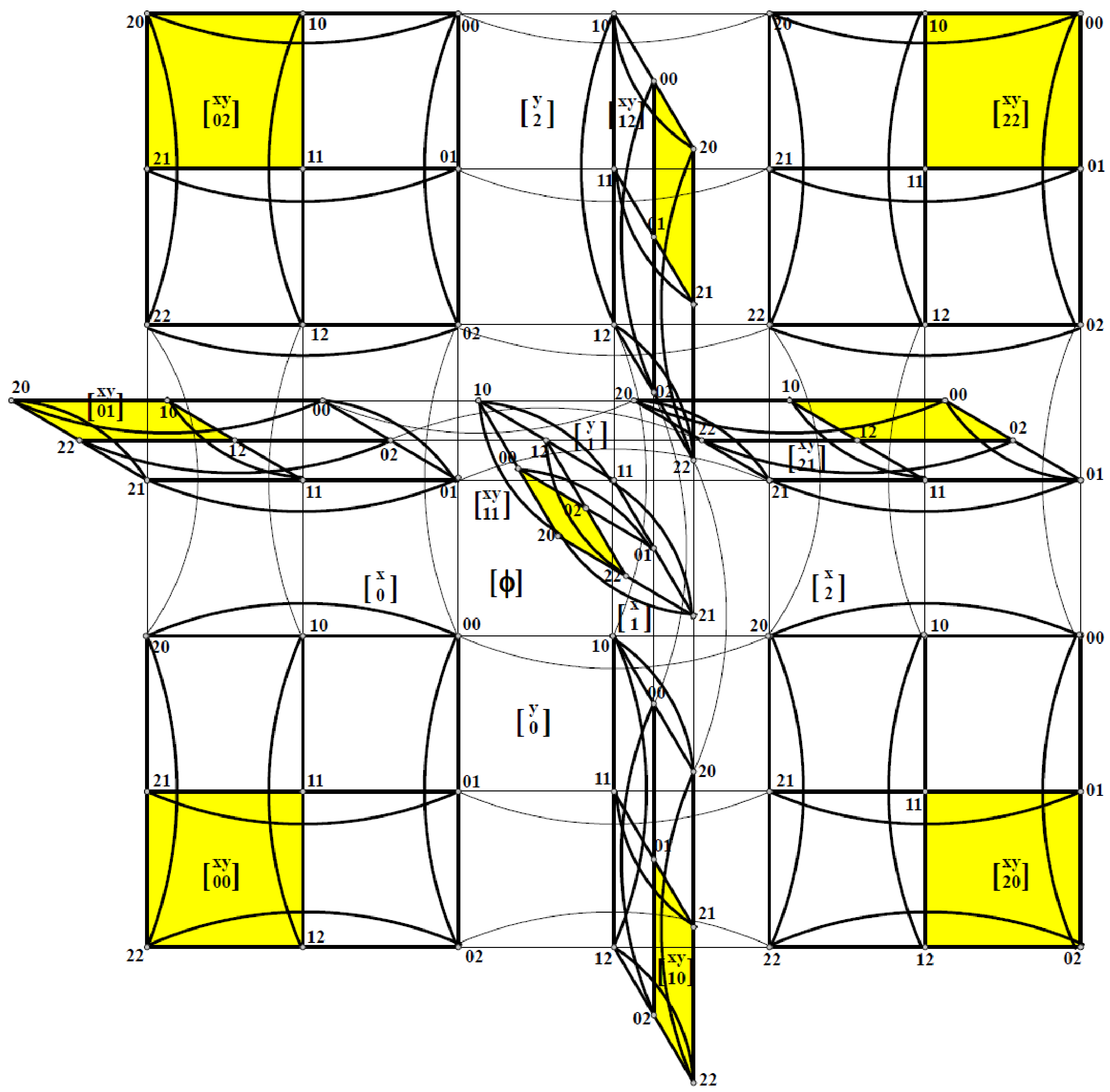}
\caption{Components of a 2-PTMC$[Q_2^2]$ of $[[\emptyset]]$ in $\Gamma_2$}
\end{figure}

As can be verified in Figure 2, the edges departing from these red-edge components cover all other vertices of $[[\emptyset]]$, proving the theorem.\end{proof}

Figure 2 may be augmented by adding the tersquares $[^{xx\,y}_{i\,j\,k}]$, with $i\ne j$ in $F_3$, etc. One may continue by adding tersquares $[^{xx\cdots xx\,yy\cdots yy}_{i\,j\,\cdots kl\,mn\cdots pq}]$ with $i\ne j\ne\ldots\ne k\ne l$ (resp. $m\ne n\ne\ldots\ne p\ne q$), i.e. not two contiguous equal values under the $x$'s (resp. $y$'s).
Taking $i,j,\ldots,k,l,m,n,\ldots p,q$ in $F_3$ to be contiguously different with just two values, (e.g. 0,1; resp. 0,2; resp. 1,2, in the upper-left; resp. upper-right; resp. lower-left quarter in Table I) is a way of obtaining 3 sub-lattices of $\Gamma_2$, each isomorphic to $\Lambda_2$, as in Table I.
But there is an infinite family of ``parallel'' sub-lattices in $\Gamma_2$ for each of the 9 sublattice types, one per each 4-cycle of a typical tersquare as in the lower-left quarter of Table I. For example, the type with values 0,2 represents ``parallel'' sub-lattices having ``shortest'' tersquares $[\emptyset]$, $[^x_1]$, $[^y_1]$, $[^{xx}_{01}]$, $[^{yy}_{01}]$, $[^{xx}_{21}]$, $[^{yy}_{21}]$, etc., including every $[^{xx\cdots xx}_{i\,j\,\cdots kl}]$, $[^{\,y\,y\cdots y\,y}_{mn\cdots pq}]$ and $[^{xx\cdots xx\,\,yy\cdots yy}_{i\,j\,\cdots kl\,mn\cdots pq}]$.

These sub-lattices can be refined by replacing each edge $e$ in them by the 2-path closing a triangle with $e$ and adding more vertices and additional edges out of them to the new vertices obtained by the said replacement, in order to distinguish the glued $2$-tercubes in $\Gamma_2$. In particular, Figure 2 is obtained from such a procedure by starting from a plane containing the upper-right quarter of Table I. Figure 3 shows a projection of a partial extension of Figure 2 in such a plane, where tersquare colors are kept as in Figure 2, including if they are projected into a 2-path. Colors here still are light-blue, blue, green and light-gray. Each tersquare in Figure 3, represented by four squares with the common central vertex $11$, is just recognizable by its upper-left vertex denomination presence. Of the four such squares, the lower-right one has the corresponding tersquare denomination. The other tersquare denominations in the three remaining squares correspond to the counterpart denominations in the representation of $[\emptyset]$.

\begin{definition} A {\it ternary $n$-cube compound} graph $\Gamma_n$ is defined as the union of all necessary $n$-tercubes that appeared glued along their (codimension 1) ternary $(n-1)$-subtercubes, for any $2<n\in\mathbb{Z}$. 
The vertices of such $\Gamma_n$ are given in terms of the $n$-tercubes. These, that can be denoted $$[^x_0],[^y_0],\ldots,[^z_0],[^x_1],[^y_1],\ldots,[^z_1],[^x_2],[^y_2],\ldots,[^z_2],[^{xy}_{00}],\ldots$$ including all those of the general form
$[J]=[^{xx\cdots xx\,yy\cdots yy\,\cdots\cdots\cdots\, zz\cdots zz}_{i\,j\,\cdots kl\,mn\cdots pq\,\cdots\cdots\cdots\,rs\cdots vw}]$, where $x,y,\ldots,z$ represent the $n$ (ternary) coordinate directions. Such $n$-tercubes are assigned locally by reflection on the $(n-1)$-subtercubes. Thus the said vertices can be denoted $x^0[J],\ldots,z^2[J]$, by following the notations above.\end{definition}

\begin{example} Figure 2 for $n=2$, represents the 2-hive $[[\emptyset]]$ with $[\emptyset]$ at its center, and its neighboring tersquares $[^x_0]$, $[^x_1]$, $[^x_2]$ in light-blue background, $[^y_0]$, and $[^y_1]$, $[^y_2]$ in yellow background, and the tersquares neighboring those tersquares, namely $[^{xy}_{i\,j}]$, ($i,j\in F_3$), in green and light-gray backgrounds. It can be seen that $\Gamma_n$ may be considered as a superset of $\Lambda_n$ in three different ways, as was commented above in relation to Table I for $n=2$.\end{example}

We pose the following.

\begin{question}
Does there exist a non-isolated PDS $S$ in $\Gamma_n$, for $n\ge 2$? If so, could $S$ behave like a lattice, for example by  restricting itself to a lattice PDS over any sub-lattice of $\Gamma_n$, as exemplified in Table I?
\end{question}

\section[\bf 5.]{\bf Isolated 2-PTMC's in $\Gamma_2$}\label{s5}

In this section, we keep working in $\Gamma_2$, but slightly modifying the definition of a $\kappa$-PTMC by replacing the used Hamming distance $h$ by the graph distance of $\Gamma_2$. It is clear that the Hamming distance in our graph-theoretical context is just the graph distance. Then, we have the following result.

\begin{theorem}\label{novel}
There exists 262144 isolated 2-PTMC's in the 2-hive $[[\emptyset]]\subset\Gamma_2$.
\end{theorem}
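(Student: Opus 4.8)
The target is a counting statement: that the 2-hive $[[\emptyset]]$ admits exactly $262144 = 2^{18}$ isolated 2-PTMC's under the graph-distance version of $\rho$. The plan is to reduce the global combinatorial problem to an independent collection of local binary choices, each of which can be made in exactly two ways, with $18$ of them. First I would fix the ambient object precisely: enumerate the $16$ tersquares of $[[\emptyset]]$ (the central $[\emptyset]$, the six subcentral ones $[^x_0],\ldots,[^y_2]$, and the nine corner ones $[^{xy}_{ij}]$) together with their $36$ participating triangles, and record which triangles are shared (``interior'') and which are free (``exterior''). An isolated $2$-PTMC $S$ means: $[S]$ has only isolated-vertex components (by the definition of \emph{isolated}), each vertex $u\notin S$ has a unique closest codeword at graph-distance $\le 2$, and the truncated $2$-spheres $\{u:\rho(u,s)\le 2\}$ over $s\in S$ partition $V([[\emptyset]])$. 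Since each singleton truncated $2$-sphere has the explicit size $\sum_{i=0}^{2}2^i\binom{2}{i}$ restricted to the graph, the first step is a volume/double-counting bound forcing $|S|$ to equal a fixed value, say $k$, determined by $|V([[\emptyset]])|$.

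Second, I would carry out the local analysis inside a single tersquare $Q_2^3$ and its attached neighbors. Each $Q_2^3$ has nine vertices; its graph-distance-$2$ balls reach across shared triangles into adjacent tersquares. The key structural claim — which I would prove by a short case check — is that the center vertex $11$ of each tersquare (in the coordinates of Table I) behaves rigidly: either it is a codeword, or it is covered by a codeword sitting on one of its neighbors, and the latter possibility is obstructed by the partition condition in all but a controlled set of configurations. Propagating this rigidity from $[\emptyset]$ outward through the subcentral tersquares to the corner tersquares, one shows that the codeword positions in the ``skeleton'' of $[[\emptyset]]$ are completely forced, leaving free choice only at certain pendant triangles on the boundary. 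I expect each such free triangle to contribute a factor of $2$ (choosing which of two vertices on an exterior triangle is the codeword, the third being covered from inside), and I would verify there are exactly $18$ such independent triangles by direct inspection of Figure 2 / Table I, giving $2^{18}$.

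Third, I would confirm that each of the $2^{18}$ choices actually yields a valid isolated $2$-PTMC — i.e. that the forced part together with any admissible choice at the $18$ free triangles really does partition $V([[\emptyset]])$ with unique nearest codewords and with $[S]$ having no edges. This is the ``existence half'' and reduces to checking that the truncated $2$-spheres around the forced codewords are pairwise disjoint and jointly cover everything except the free-triangle vertices, and that each choice at a free triangle covers exactly the remaining three vertices of that triangle once; the minimum-distance-$3$ type bookkeeping (as in Theorem~\ref{t2}) rules out edges in $[S]$. The main obstacle I anticipate is the rigidity argument in the second step: showing that \emph{no} alternative codeword placement in the skeleton survives the partition constraint. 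Unlike the lattice setting of Section~\ref{s2}, $\Gamma_2$ is not vertex-transitive, so one cannot invoke a clean algebraic tiling; instead the argument must track how a single displaced codeword creates either an overlap of two $2$-spheres or an uncovered vertex somewhere in $[[\emptyset]]$, which is a finite but delicate case analysis over the $16$ tersquares. If that analysis is organized by starting at $[\emptyset]$ and using the fact that $[\emptyset]$ shares a triangle with each subcentral tersquare and a single vertex with each corner tersquare, the propagation should close, and the final count $2^{18}=262144$ follows.
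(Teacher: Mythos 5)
Your plan rests on a factorization of $262144$ as $2^{18}$ realized by eighteen independent binary choices at ``free triangles,'' sitting on top of a completely forced ``skeleton'' of codewords propagated outward from $[\emptyset]$. That is not the structure of the code, and the plan as written would not close. The paper's count is $4^9$: the nine corner tersquares $[^{xy}_{ij}]$ ($i,j\in F_3$) partition the $81$ vertices of $[[\emptyset]]$, each corner tersquare is itself the truncated $2$-sphere around \emph{any} vertex of its ``external'' $4$-cycle (the yellow-faced square of Figure 4), and the only freedom is the choice of one vertex out of each of these nine $4$-cycles --- a $4$-way choice per corner tersquare, nine times, with \emph{no} forced codewords anywhere in the central or subcentral tersquares. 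Your rigidity step (forcing the center vertex $11$ of each tersquare to be a codeword or to be covered in a controlled way, then propagating) is looking for a forced part that does not exist: the code has exactly $9$ vertices, all of them located in the corner tersquares, and the central tersquare $[\emptyset]$ and the six subcentral tersquares contain no codewords at all. Likewise your unit of free choice is wrong in kind, not just in count: the degrees of freedom live on $4$-cycles (square faces of the $K_3\square K_3$ product structure), not on triangles, so ``which of two vertices of an exterior triangle'' is not a choice that arises. While $2^{18}=4^9$ numerically, a proof organized around eighteen triangle-local bits cannot be completed, because there is no collection of eighteen independent two-element choice sets in $[[\emptyset]]$ whose product parametrizes these codes.

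Two parts of your plan are salvageable and in fact implicit in the paper's (very terse) argument: the volume count ($81$ vertices divided by $2$-spheres of size $9$ forces $|S|=9$), and the verification that the chosen spheres are pairwise disjoint and cover everything (here this reduces to checking that the nine corner tersquares partition $V([[\emptyset]])$ and that the graph-distance $2$-ball around an external-$4$-cycle vertex is exactly its corner tersquare). If you redirect your local analysis from ``which vertex of each triangle'' to ``which vertex of each external $4$-cycle,'' and drop the forced-skeleton step entirely, you recover the paper's argument. Note also that the theorem as stated only asserts existence of $262144$ such codes (a lower bound realized by this family); neither the paper nor your proposal establishes that these are \emph{all} the isolated $2$-PTMC's of $[[\emptyset]]$, so the exactness half of your step one is more than is needed.
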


\begin{proof}
In Figure 4, the nine copies $[^{xy}_{ij}]$, ($i,j\in F_3$), of the tersquare are shown with its edges in thick black trace, against the thin black trace of the remaining edges of the 2-hive $[[\emptyset]]$. These nine copies happen to be the truncated 2-spheres centered at the vertices of an isolated PTMC of $[[\emptyset]]$ constituted as a selection of one vertex per yellow-faced 4-cycle in the figure. We may say that these yellow-faced 4-cycles are the ''external'' 4-cycles of $[[\emptyset]]$. So, there are $4^9=262144$ 2-PTMC's in $\Gamma_2$.
\end{proof}

\begin{figure}[htp]
\hspace*{22mm}
\includegraphics[scale=0.54]{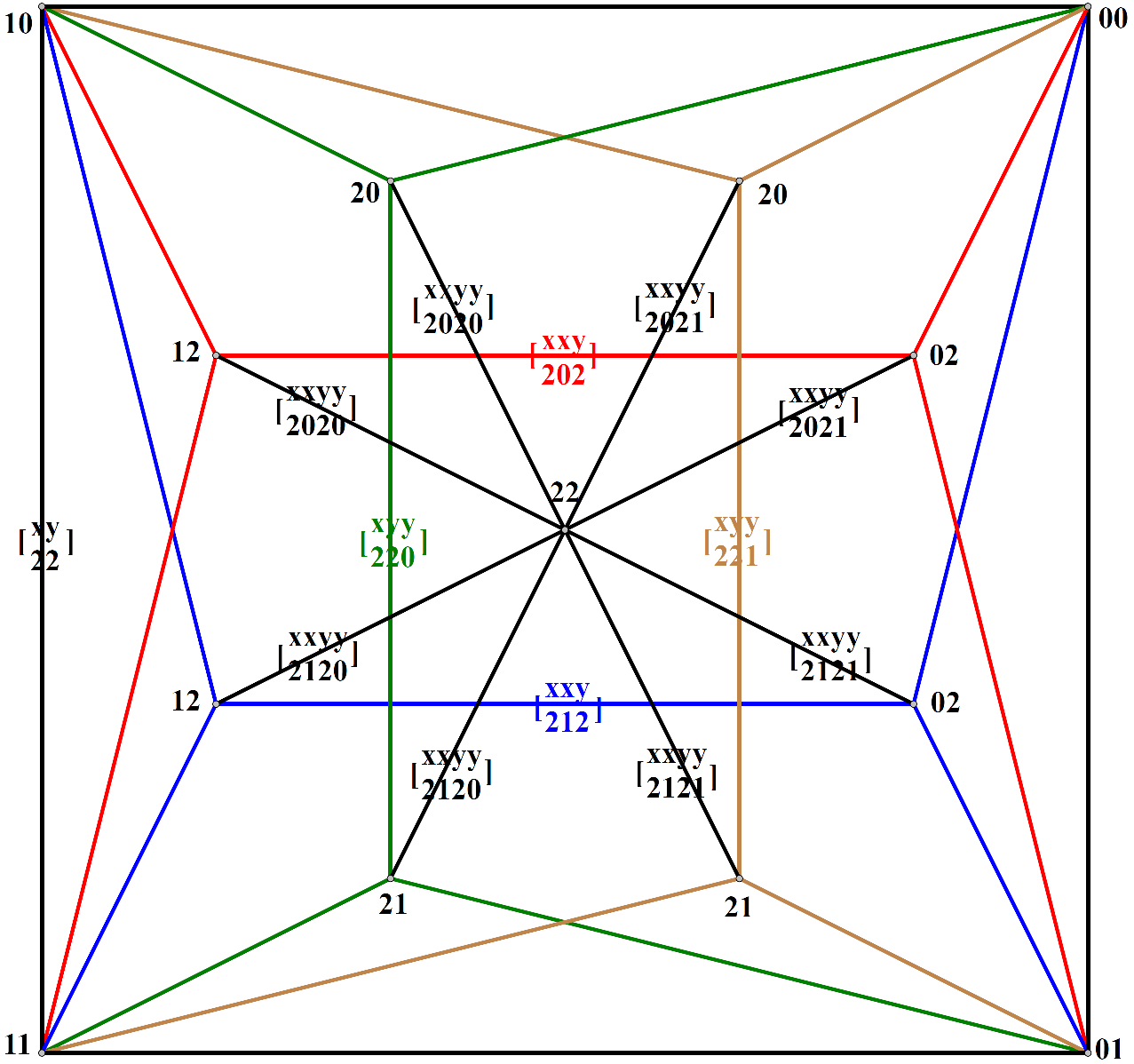}
\caption{Analysis of the square $(00,01,11,10)$ in tersquare $[^{xy}_{22}]$}.
\end{figure}

\begin{corollary}
There exists a 2-PTMC$[Q_2^2]$ in the 2-hive $[[\emptyset]]$.
\end{corollary}

\begin{proof}
The yellow-faced 4-cycles are copies of $Q_2^2$ and induce the components of the claimed 2--PTMC-$[Q_2^2]$.
\end{proof}

\begin{theorem}\label{2PTMC}
There exists an infinite number of isolated 2-PTMC in $\Gamma_2$.
\end{theorem}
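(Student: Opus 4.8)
The plan is to bootstrap from Theorem~\ref{novel}, which already establishes that the $2$-hive $[[\emptyset]]$ carries (many) isolated $2$-PTMC's, and to promote one such local solution to a globally defined isolated $2$-PTMC of the whole infinite graph $\Gamma_2$. First I would recall the tiling picture underlying Theorem~\ref{novel}: the nine corner tersquares $[^{xy}_{ij}]$ ($i,j\in F_3$) are precisely the truncated $2$-spheres of radius $2$ centered at a set of nine central vertices, one chosen from each ``external'' $4$-cycle of $[[\emptyset]]$, and these nine spheres partition $V([[\emptyset]])$. The key structural observation I would establish is that $\Gamma_2$ is tiled by translates (in the superlattice sense) of the hive $[[\emptyset]]$: the iterative glueing construction of Section~\ref{s4} exhibits $\Gamma_2$ as a union of hive-blocks, and along each shared triangle two adjacent hives meet in a controlled way. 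I would make precise which triangles of a hive are ``boundary'' triangles and verify that the nine-sphere partition of one hive is compatible, across every shared boundary triangle, with the nine-sphere partition of the neighbouring hive — i.e. no vertex on a shared triangle is claimed by a sphere from both sides, and every such vertex is claimed exactly once.

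The main steps, in order, would be: (1) Fix once and for all a consistent rule assigning to each hive-block $[[J]]$ in $\Gamma_2$ a copy of the Theorem~\ref{novel} configuration — concretely, pick in each external $4$-cycle the ``same'' vertex relative to the local coordinate labelling, using that the tersquares are assigned by reflection on their codimension-$1$ subtercubes (as in the Definition of $\Gamma_n$). (2) Verify local perfectness: within each hive the chosen configuration is an isolated $2$-PTMC by Theorem~\ref{novel}. (3) Verify the boundary compatibility: for a vertex $v$ lying in the overlap region of two hives, show $\rho(v,S)$ is realized by a unique $s\in S$ and that the truncated $2$-spheres from both sides agree on which central vertex covers $v$; this is where the reflection/glueing symmetry of $\Gamma_2$ does the work, since a boundary triangle of one hive is an interior triangle (or a controlled position) of the adjacent hive. (4) Conclude that the union $S$ of all the local codes is an isolated $2$-PTMC of $\Gamma_2$, and that there are infinitely many because $\Gamma_2$ contains infinitely many hive-blocks and the $4^9$ choices per block can be made independently — or, more cheaply, because we may already produce infinitely many distinct such $S$ by varying the choice inside a single designated block while keeping a fixed compatible choice elsewhere, provided the varied choices remain mutually compatible across that block's boundary.

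I would organize the write-up around a single lemma: \emph{the nine-sphere partition of $[[\emptyset]]$ extends across every shared triangle to the analogous partition of the adjacent hive}, after which the theorem follows by a routine union/exhaustion argument (every vertex of $\Gamma_2$ lies in some hive, hence is covered exactly once; locally-finite so no convergence issues). The hard part will be Step~(3): controlling the interaction at the boundaries. The subtlety is that a single vertex of $\Gamma_2$ may lie in several hives simultaneously (the hives overlap, they do not merely abut), so ``covered exactly once globally'' is not automatic from ``covered exactly once in each hive containing it'' — one must check the covering sphere is the \emph{same} regardless of which hive one reads it off from, and that the $\rho$-distance $n+1=3$ truncation does not create a spurious second closest code vertex living in a different hive. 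I expect this to reduce to a finite check on the possible overlap patterns of two or three mutually glued hives, exploiting that all tersquares are related by the reflection rule, so that the local combinatorics is the same at every boundary triangle; once that finite verification is in hand the infinitude of solutions is immediate from the independence of the block-choices.
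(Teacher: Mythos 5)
Your plan is essentially the paper's own argument: the published proof likewise analyzes a single ``external'' (yellow) $4$-cycle, checks via the local picture of Figure~5 that any of its four vertices dominates the surrounding tersquares within truncated distance $2$, and then iterates the Theorem~\ref{novel} configuration hive-by-hive over neighbouring $2$-hive pairs (which share four tersquares), obtaining infinitude from the freedom of choice at each step. Your explicit worry about overlapping hives and boundary compatibility is exactly the point the paper handles (somewhat informally) through its analysis of the shared tersquares between adjacent hives, so the two arguments coincide in substance.
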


\begin{proof}
Let us analyze one of the yellow squares in Figure 4, say square (00, 01,11,10) in tersquare $[^{xy}_{22}]$ , as represented in Figure 5; the remaining eight yellow squares have similar properties to those to be described, once the notations of local vertices and neighboring tercubes are updated in each case in place of those in the figure. Besides the ``external'' 4-cycle $(00,01,11,10)$  in Figure 5, whose edges are in black color, we set in red and blue, (green and hazel) colors those additional edges defining  a pair of 4-cycles in the respective tercubes $[^{xxy}_{202}]$ and $[^{xxy}_{212}]$, ($[^{xyy}_{220}]$ and $[^{xyy}_{221}]$), namely $\{(00,02,12,10),(01,02,12,11)\}$, ($\{(00,20,21,01),(10,20,21,11)\}$), equally denoted but pertaining to different tersquares, as shown. 

Since it is seen that the maximum truncated distance in Figure 5 is 2, we conclude that any one of the four vertices of the black 4-cycle dominates the remaining vertices in the other 4-cycles in the figure, in positions in their tersquares similar to those of the yellow cycles in Figure 4, namely the four just mentioned tersquares as well as the ``opposite'' tersquares $[^{xxyy}_{2020}]$,$[^{xxyy}_{2021}]$, $[^{xxyy}_{2120}]$ and $[^{xxyy}_{2121}]$, where these involved tersquare denominations are set in Figure 5 straddling edges common to two of their  4-cycles, for better reference, twice for the last four mentioned tersquares. This particular case is easily generalized to conclude that each vertex in a yellow square dominates via $\rho$ in the neighboring tersquares in a form similar to that of $[[\emptyset]]$.

With the same structure as the 2-hive $[[\emptyset]]$, we have the 2-hives of its neighboring 2-hive pair $\{[[^{xxx}_{010}]], [[^{xxx}_{012}]]\}$, etc., sharing four tersquares with $[[\emptyset]]$, tersquares denoted $(10,11,12)$ ,  both in $[^{x}_{0}]$ and in $[^{xy}_{00}]$  and $[^{xy}_{01}]$ and $[^{xy}_{02}]$. These four tersquares belong both in $[[^{xxx}_{010}]]$  and $[[^{xxx}_{012}]]$ to the four respective tercubes $[^{xx}_{01}]$, $[^{xxy}_{010}]$, $[^{xxy}_{011}]$ and $[^{xxy}_{012}]$. Moreover, the union of these four tercubes constitutes the intersection $[[^{xxx}_{010}]]\cap[[^{xxx}_{012}]]$. By considering in each of these two 2-hives the equivalent of the yellow squares of $[[\emptyset]]$ in Figure 4, and selecting in each such yellow square a vertex contributing to the construction of an isolated 2-PTMC of $\Gamma_2$,, iteration of such a procedure eventually allows the completion of such dominating set. Observe that the argument exposed from the 2-hive pair $\{[[^{xxx}_{010}]], [[^{xxx}_{012}]]\}$ is one of twelve cases arising from the 2-hive
$[[\emptyset]]$. Iteration of these twelve cases departing from any 2-hive of $\Gamma_2$ that is reached in the continuing procedure allows to advance a further step in the construction of such isolated 2-PTMC.
\end{proof}

\begin{remark} The argument of Theorem~\ref{2PTMC} can be modified to ascertain the existence of 2-PTMC$[H]$, where $H$ is a 4-cycle, or the union of  yellow squares with a common vertex in different 2-hives, like the eight squares, in Figure 5,  at vertex 10 in the tersquares $[^{xy}_{2,2}]$, $[^{xxy}_{202}]$. $[^{xxy}_{212}]$, $[^{xyy}_{220}]$, $[^{xyy}_{221}]$, $[^{xxyy}_{2020}]$, $^{xxyy}_{2021}]$ and $[^{xxyy}_{2120}]$.
\end{remark}

\begin{conjecture} There exists an isolated $n$-PTMC in $\Gamma_n$, $\forall n>2$.
\end{conjecture}

\begin{question}
Does there exist a suitable way of declaring a susbset $S\subset V(\Gamma_n)$ to be {\it periodic}, or {\it periodic-like}, that extends the notion of periodicity of $\Lambda_n$ at the end of Remark~\ref{remark} to one in $\Gamma_n$? so that a replacement of the notion of ``quotient'' or ``toroidal'' graphs of $\Lambda_n$ can be found that way for $\Gamma_n$?
\end{question}

\section{Appendix}\label{appendix}

On the left of Figure 6, an oriented representation of a ternary square is provided. Here, one may take the vertices as decision conditional diamonds, the outgoing arrows as ``Yes" or ``True" option arrows and the incoming arrows as ``No" or ``False" option arrows. An elementary flowchart modeled on this oriented ternary square is given as an example on the right of Figure 6, where the root of the flowchart corresponds to vertex 00 and there are four terminators, corresponding to the vertices 01, 02, 11 and 12. The decision conditional diamonds correspond to the vertices 00, 10, 20, 21 and 22. To avoid terminators that are the end of more than one process, as is the case of 11 and 12 in the figure, one may select in $\Gamma_2$ modeling flowchart continuation in adjacent tercubes to the immediately previous ones, each sharing with the current tercube an already used triangle.

\begin{figure}[htp]
\hspace*{2.3cm}
\includegraphics[scale=0.51]{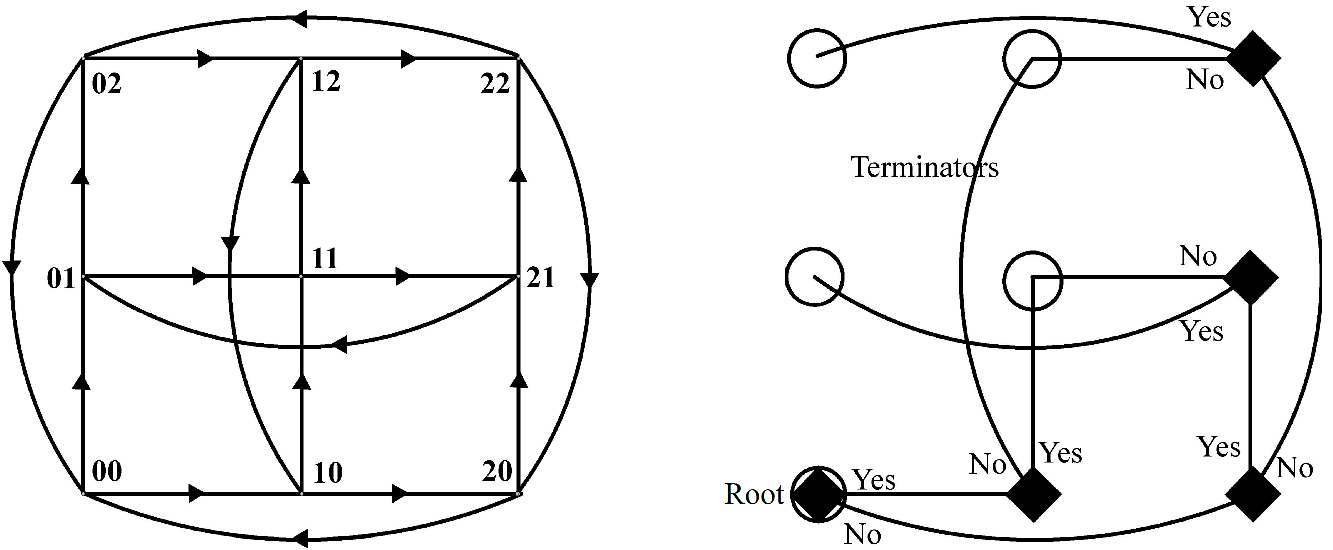}
\caption{Ternary square as a model of algorithm flowchart}
\end{figure}

\end{document}